\newcommand{\dd}[2]{\frac{\mathrm{d}#1}{\mathrm{d}#2}}
\newcommand{\ddp}[2]{\frac{\partial#1}{\partial#2}}
\newcommand{\D}{\partial D}
\renewcommand{\S}{\mathcal{S}}
\newcommand{\K}{\mathcal{K}}
\newcommand{\de}{\: \mathrm{d}}
\newcommand{\outside}{\mathbb{R}^3\setminus \overline{D}}
\renewcommand{\i}{\mathrm{i}}
\newcommand{\C}{\mathcal{C}}
\renewcommand*{\Re}{\operatorname{Re}}
\renewcommand*{\Im}{\operatorname{Im}}
\newcommand{\Con}{\mathcal{C}}
\crefname{hypothesis}{Hypothesis}{Hypotheses}
\title{Asymptotic links between signal processing, acoustic metamaterials and biology}
\author{Habib Ammari\thanks{Department of Mathematics, ETH Zurich, Rämistrasse 101, CH-8092 Zürich, Switzerland
  (\email{habib.ammari@math.ethz.ch}).}
\and Bryn Davies\thanks{Department of Mathematics, Imperial College London, 180 Queen's Gate, London, SW7 2AZ, UK} (\email{bryn.davies@imperial.ac.uk}).}
\begin{document}

\maketitle

\begin{abstract}
	Biomimicry is a powerful science that takes inspiration from nature's innovative solutions to challenging problems. In this work, we use asymptotic methods to develop the mathematical foundations for the exchange of design inspiration and features between biological hearing systems, signal processing algorithms and acoustic metamaterials. Our starting point is a concise asymptotic analysis of high-contrast acoustic metamaterials. We are able to fine tune this graded structure to mimic the biomechanical properties of the cochlea, at the same scale. We then turn our attention to developing a biomimetic signal processing algorithm. We use the response of the cochlea-like metamaterial as an initial filtering layer and then add additional biomimetic processing stages, designed to mimic the human auditory system's ability to recognise the global properties of natural sounds. This demonstrates the three-way exchange of ideas that, thanks to our analysis, is possible between signal processing, metamaterials and biology.
\end{abstract}

\begin{keywords}
  subwavelength resonance, bio-inspired sensors, gammatone filters, convolutional networks, cochlea, hearing, natural sounds
\end{keywords}

\begin{AMS}
  35C20, 94A12, 74J20, 35J05, 31B10
\end{AMS}

\section{Introduction}

This work uses asymptotic methods to explore links between the human auditory system, acoustic metamaterials and signal processing algorithms. Similar links are often exploited by engineers looking for design inspiration in a practice known as \emph{biomimicry}, whereby systems are designed to mimic the function of biological systems. Since the performance of biological systems typically represents the gold standard against which other solutions are compared, this has proved a successful approach to innovation in many different fields of science and engineering \cite{benyus1997biomimicry}. This work will provide the mathematical underpinning for links between the fields, exploiting their shared biomimetic goal. It will facilitate a trilateral exchange of ideas (as depicted in Fig.~\ref{fig:workflow}), which has the potential to accelerate progress.

\begin{figure}
	\centering
	\includegraphics{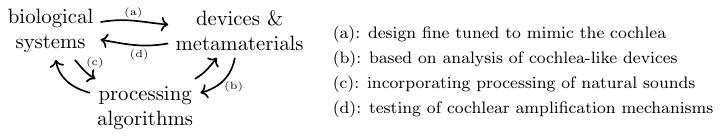}
	\caption{Biomimicry in action. This work sets out the mathematical foundations for the exchange of design principles and features between biological hearing systems, artificial sound-filtering devices and signal processing algorithms.}
	\label{fig:workflow}
\end{figure}

In the world of signal processing, biomimetic solutions have been longstanding. For example, the multi-scale nature of the wavelet transform, which is linear but has logarithmic frequency scaling, replicates the response of the human auditory system \cite{daubechies1992ten, yang1992auditory}. Additionally, efforts have been made to develop approaches that replicate the human auditory system's invariance to certain transformations. For example, the scattering transform is based on the wavelet transform but is stable to translations and $C^1$ time-warping deformations \cite{anden2014deep, mallat2012group}. As well as wavelets, other kernels have been explored for use in convolutional signal transforms, such as windowed Fourier modes \cite{alm2002time, cohen1995time} and learned kernel functions \cite{smith2006efficient}. Although less prevalent, gammatone kernels have been observed to replicate human auditory function particularly well. This has been derived from several sources, including approximations of response curves from psychoacoustic experiments \cite{patterson1976auditory, patterson1974auditory, patterson1988efficient}, models of cochlear receptor cells \cite{hewitt1994computer} and direct observations of the cochlear basilar membrane, whose response can be fitted to sums of gammatones \cite{bell2018cochlear, flanagan1960models}. Since gammatones also have favourable properties such as causality, non-invariance to time reversal and ease of implementation \cite{lyon2017human}, they are often used to replicate human auditory processing.

A community of researchers designing graded metamaterials that mimic the function of the cochlea has emerged recently \cite{davies2019fully, davies2020hopf, karlos2020cochlea, rupin2019mimicking, zhao2019metamaterial}. Metamaterials are artificial materials with complex microstructures that can be designed to achieve remarkable and powerful properties \cite{craster2012acoustic}. In this work, we are interested in graded metamaterials whose material parameters vary adiabatically as a function of one spatial dimension. This property allows them to separate frequencies via the \emph{rainbow effect}. This was first observed in the setting of optics \cite{tsakmakidis2007trapped} and has since been studied in many settings, including water waves \cite{bennetts2018graded}, plasmonics \cite{jang2011plasmonic} and acoustics \cite{zhu2013acoustic}. This phenomenon is qualitatively similar to how frequencies are filtered by a graded membrane in the cochlea and researchers have explored the extent to which the metamaterials' designs can be fine tuned to match the cochlear response. Several different types of resonators have been used, including vibrating reeds \cite{bell2012resonance}, quarter-wavelength pipe resonators \cite{rupin2019mimicking}, Helmholtz oscillators \cite{karlos2020cochlea, zhao2019metamaterial} and Minnaert cavities \cite{davies2019fully, davies2020hopf, davies2021robustness}.

In this work, we study an acoustic metamaterial composed of high-contrast material cavities, referred to here as \emph{subwavelength resonators} and sometimes known as \emph{Minnaert cavities}. These are material cavities whose density is significantly less than that of the background medium, meaning they experience resonance in response to wavelengths much greater than their size. The inspiration for this setup comes from Minnaert's study of the acoustic properties of air bubbles in water \cite{minnaert1933musical}. Customisable structures with these properties can be created by injecting bubbles into polymer gels \cite{leroy2009design, leroy2009transmission}. Graded systems of subwavelength resonators that mimic the frequency separation of the cochlea were studied in \cite{davies2019fully}. A two-dimensional model was studied and it was shown that the structure's properties can be chosen such that the frequency-position (tonotopic) map of the cochlea can be reproduced, as well as its characteristic travelling wave behaviour. It was also shown that the function of this device is robust with respect to imperfections in its design, to an extent that is similar to the cochlea itself \cite{davies2021robustness}. This transfer of design ideas and inspiration is labelled (a) in Fig.~\ref{fig:workflow}.

We will present a design for a three-dimensional graded metamaterial that mimicks the frequency separation of the cochlea. This builds on the two-dimensional analysis of \cite{davies2019fully} and uses an asymptotic characterisation in terms of the \emph{generalized capacitance matrix}, as introduced in \cite{ammari2021capacitance}. This approximation provides a very efficient computational framework for optimising the properties of the structure. We then derive a leading-order time-domain decomposition for the scattered field, which takes the form of a modal decomposition with coefficients given by convolutions with gammatone kernels. The signal transform given by these coefficients has favourable continuity properties, which we explore in this work, in addition to its biomimetic derivation. The deduction of a signal processing approach from the bio-inspired metamaterial corresponds to (b) in Fig.~\ref{fig:workflow}.

In the final part of this paper, we explore link (c) in Fig.~\ref{fig:workflow}, to exemplify the trilateral exchange of ideas that is possible. Given the signal transform that was deduced from the cochlea-inspired metamaterial, we use other properties of the human auditory system to design additional processing steps to reveal more sophisticated features of sounds. In particular, we will exploit the fact that the human auditory system is adapted to the properties of certain important inputs and that enhanced neural responses have been observed in response to behaviourally significant sounds such as animal calls, human vocalisations and environmental sounds \cite{theunissen2014neural, yu2005preference}. These sounds are often known collectively as \emph{natural sounds} and have been observed to satisfy certain long-range statistical properties, which we interpret as their defining characteristic \cite{voss19751, attias1997temporal, attias1998coding}. We will develop an approach that takes the output from an array of gammatone filters (as deduced from a cochlea-inspired metamaterial) and extracts the parameters that describe the global properties of natural sounds. It is beyond the scope of the present work to explore applications of this representation. The discussion presented here is intended as an example of the innovation that is possible, given a framework like the one developed in this work.


A valuable observation about the cochlea is that it is an active sensor and uses an amplification mechanism in its function. While we will only consider passive metamaterials in this work, this property can be replicated by introducing amplification to our system. This was studied theoretically in \cite{davies2020hopf} and in experiments on scaled-up arrays by \cite{rupin2019mimicking}. This is a promising line of investigation since there are many open questions about the nature of cochlear amplification \cite{hudspeth2010critique}. Cochlea-inspired active metamaterials offer a convenient analogue environment in which to conduct real-time experiments to test theories, without the need for expensive simulations or challenging and invasive experiments on biological specimens. This line of investigation is labelled (d) in Fig.~\ref{fig:workflow}.

In this paper, we will explore links (a)-(c) in Fig.~\ref{fig:workflow} one by one. In Section~\ref{sec:metamaterial} we develop a concise, rigorous approach for studying an array of subwavelength resonators and design a structure that mimics the biomechanical properties of the cochlea. In Section~\ref{sec:transforms} we examine the use of an array of gammatone filters, deduced from the cochlea-inspired metamaterial, as a signal processing algorithm. We study its stability properties and its behaviour when cascaded. Finally, in Section~\ref{sec:natsounds} we focus our attention on the class of natural sounds and propose a biomimetic approach that extracts the global properties of a sound from the output of the cochlea-inspired metamaterial.

\section{Analysis of a cochlea-inspired metamaterial} \label{sec:metamaterial}

\subsection{Problem setting}

We will study a simple metamaterial composed of $N\in\mathbb{N}$ disjoint bounded inclusions, which represent the resonators and will be labelled as $D_1,D_2,\dots,D_N\subset\mathbb{R}^3$, surrounded by an unbounded homogeneous background medium. We will assume that the boundaries are all in $C^{1,\alpha}$ for some $0<\alpha<1$ and will write $D=D_1\cup\dots\cup D_N$ to denote the entire structure. In order to replicate the spatial frequency separation of the cochlea, we are interested in the case where the resonators are arranged in a line and the array has a size gradient, meaning each resonator is slightly larger than the previous, as depicted in Fig.~\ref{fig:geom}.

\begin{figure}
	\centering
	\includegraphics[width=0.9\linewidth]{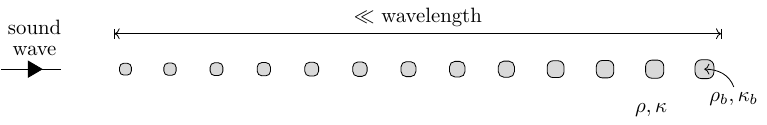}
	\caption{A graded array of subwavelength resonators can be designed to mimic the biomechanical properties of the cochlea in response to a sound wave. The large density contrast $\delta:=\rho_b/\rho\ll1$ is crucial for the subwavelength resonant response of the structure.}
	\label{fig:geom}
\end{figure}

We will denote the density and bulk modulus of the material within the bounded regions $D$ by $\rho_b$ and $\kappa_b$, respectively. The corresponding parameters for the background medium are $\rho$ and $\kappa$. The wave speeds in $D$ and $\outside$ are given by
\begin{equation*}
v_b=\sqrt{\frac{\kappa_b}{\rho_b}} \quad\text{and} \quad v=\sqrt{\frac{\kappa}{\rho}}.
\end{equation*}
We also define the dimensionless contrast parameter
\begin{equation*}
\delta=\frac{\rho_b}{\rho}.
\end{equation*}
We will assume that $\delta\ll1$, meanwhile $v_b=O(1)$ and $v=O(1)$. This high-contrast assumption will give the desired subwavelength behaviour and facilitate an asymptotic analysis in terms of $\delta$. See \cite{ammari2020review} for a review of other applications of structures satisfying this high-contrast criterion.

\subsection{Boundary integral operators}

We will use boundary integral formulations to study the scattering of time-harmonic acoustic waves by the resonator array $D$. In particular, we will represent solutions using the Helmholtz single layer potential $\S_{D}^k$. The Helmholtz single layer potential associated to the domain $D$ and wavenumber $k\in\mathbb{C}$ is defined, for some density function $\varphi\in L^2(\D)$, as
\begin{equation}
\S_{D}^k[\varphi](x):=\int_{\D} G(x-y,k)\varphi(k)\de\sigma(y), \qquad x\in\mathbb{R}^3,
\end{equation}
where $G$ is the outgoing Helmholtz Green's function, given by
\begin{equation} \label{eq:green}
G(x,k):=-\frac{1}{4\pi|x|}e^{\i k|x|}.
\end{equation}
We will use the single layer potential to reduce the problem of solving a Helmholtz problem in $\mathbb{R}^3$ to a problem posed in terms of density functions evaluated on the boundary $\D$.

We will be interested in the low frequency regime and seek solutions as $k\to0$. We will make use of the asymptotic expansion
\begin{equation} \label{eq:S_expansion}
\S_{D}^k[\varphi]=\S_{D}[\varphi]+k\S_{D,1}[\varphi]+O(k^2),
\end{equation}
where $\S_{D}:=\S_D^0$ (\emph{i.e.} the Laplace single layer potential) and
\begin{equation*}
\S_{D,1}[\varphi](x):=\frac{1}{4\pi\i}\int_{\D} \varphi(y) \de\sigma(y).
\end{equation*}
One crucial property to note is that $\S_D$ is invertible as a map $L^2(\D)\to H^1(\D)$, where $H^1(\D)$ is the space of square integrable functions on $\D$ whose weak first derivatives exist and are square integrable. Further properties of the single layer potential and related operators can be found in \emph{e.g.} \cite{ammari2018mathematical}. We will need one additional piece of boundary-integral machinery: the Neumann-Poincar\'e operator, which is valuable since it can be used to describe how the derivative of the single layer potential varies across the boundary $\D$. The Neumann-Poincar\'e operator associated to the domain $D$ and wavenumber $k\in\mathbb{C}$ is defined, for a density $\varphi\in L^2(\D)$, as
\begin{equation}
\K_{D}^{k,*}[\varphi](x):=\int_{\D} \frac{\partial G(x-y,k)}{\partial \nu(x)}\varphi(y)\de\sigma(y), \qquad x\in\D,
\end{equation}
where $G$ is the outgoing Helmholtz Green's function \eqref{eq:green} and $\partial/\partial\nu$ denotes the outward normal derivative on $\D$. This is related to the single layer potential by the conditions
\begin{equation} \label{eq:jump}
\ddp{}{\nu}\S_{D}^k[\varphi]\big|_\pm(x)= \left(\pm\frac{1}{2}I+\K_{D}^{k,*}\right)[\varphi](x),\qquad \varphi\in L^2(\D),\,x\in\D,
\end{equation}
where the subscripts $+$ and $-$ denote evaluation from outside and inside the boundary $\D$, respectively, and $I$ is the identity operator on $L^2(\D)$.

As is the case for the single layer potential, the Neumann--Poincar{\'e} operator has a helpful asymptotic expansion at low frequencies. In particular, we have that as $k\to0$
\begin{equation} \label{eq:K_expansion}
\K_{D}^{k,*}[\varphi]=\K_{D}^{*}[\varphi]+k\K_{D,1}[\varphi] + k^2\K_{D,2}[\varphi] +k^3\K_{D,3}[\varphi]+O(k^4),
\end{equation}
where $\K_{D}^{*}:=\K_{D}^{0,*}$, $\K_{D,1}=0$, 
\begin{equation}
\K_{D,2}[\varphi](x):=\frac{1}{8\pi}\int_{\D}\frac{(x-y)\cdot \nu(x)}{|x-y|}\varphi(y)\de\sigma(y)
\end{equation}
and
\begin{equation}
\K_{D,3}[\varphi](x):=\frac{\i}{12\pi}\int_{\D} (x-y)\cdot \nu(x)\varphi(y)\de\sigma(y).
\end{equation}

Several of the operators in the expansion \eqref{eq:K_expansion} can be simplified when integrated over all or part of the boundary $\D$. As proved in \cite[Lemma~2.1]{ammari2017double}, it holds for any $\varphi\in L^2(\D)$ and $i=1,\dots,N$ that
\begin{equation} \label{K_properties}
\begin{split}
\int_{\D_i}\left(-\frac{1}{2}I+\K_D^{*}\right)[\varphi]\de\sigma=0,
\qquad&\int_{\D_i}\left(\frac{1}{2}I+\K_D^{*}\right)[\varphi]\de\sigma=\int_{\D_i}\varphi\de\sigma,\\
\int_{\D_i} \K_{D,2}[\varphi]\de\sigma=-\int_{D_i}\S_D[\varphi]\de x
\quad&\text{and}\quad\int_{\D_i} \K_{D,3}[\varphi]\de\sigma=\frac{\i|D_i|}{4\pi}\int_{\D}\varphi\de\sigma.
\end{split}
\end{equation}

\subsection{Subwavelength resonance} \label{sec:pure_tones}

Suppose that the incoming signal is a plane wave parallel to the $x_1$-axis with angular frequency $\omega$ given by $u^{in}=A\cos(kx_1-\omega t)$, where $k=\omega/v$ is the wavenumber. Then, the scattered pressure field is given by $\Re(u(x,\omega)e^{-\i\omega t})$ where $u$ satisfies the Helmholtz equation
\begin{equation} \label{eq:helmholtz_equation}
\begin{cases}
\left( \Delta + k^2 \right)u = 0 & \text{in} \ \outside, \\
\left( \Delta + k_b^2 \right)u = 0 & \text{in} \ D,
\end{cases}
\end{equation}
where $k=\omega/v$ and $k_b=\omega/v_b$, along with the transmission conditions
\begin{equation} \label{eq:transmission}
\begin{cases}
u_+ - u_- = 0 & \text{on} \ \D,\\
\frac{1}{\rho} \ddp{u}{\nu_x}\big|_+ - \frac{1}{\rho_b} \ddp{u}{\nu_x}\big|_- = 0 & \text{on} \ \D,
\end{cases}
\end{equation} 
and $u-u^{in}$ satisfying the Sommerfeld radiation condition in the far field, which ensures that energy radiates outwards \cite{ammari2018mathematical}. We will characterise solutions to this scattering problem in terms of its subwavelength resonant modes.

\begin{definition}[Subwavelength resonant frequency]
	We define a {subwavelength resonant frequency} to be $\omega=\omega(\delta)$ such that:
	\begin{itemize}
		\item[(i)] there exists a non-trivial solution to \eqref{eq:helmholtz_equation} which satisfies \eqref{eq:transmission} and the radiation condition when $u^{in}=0$,
		\item[(ii)] $\omega$ depends continuously on $\delta$ and is such that $\omega(\delta)\to0$ as $\delta\to0$.
	\end{itemize}
\end{definition}

It is well known that the spectrum of the scattering problem \eqref{eq:helmholtz_equation} is symmetric in the imaginary axis. In particular, we have the following result from \cite{dyatlov2019mathematical}, where $z^*$ is used to denote the complex conjugate of $z\in\mathbb{C}$. 
\begin{lemma}
	If $\omega\in\mathbb{C}$ is a subwavelength resonant frequency, then $-\omega^*$ is also a subwavelength resonant frequency with the same multiplicity.
\end{lemma}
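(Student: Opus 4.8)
The plan is to exploit a conjugation symmetry of the scattering problem. Suppose $\omega$ is a subwavelength resonant frequency with associated non-trivial solution $u$ to \eqref{eq:helmholtz_equation}--\eqref{eq:transmission} (with $u^{in}=0$) satisfying the radiation condition, and consider the candidate frequency $-\omega^*$. First I would write down the natural candidate eigenfunction for $-\omega^*$, namely $\tilde u:=u^*$ (complex conjugate), and check that it solves the governing equations at frequency $-\omega^*$. The key observation is that the wavenumbers transform as $k=\omega/v \mapsto \tilde k = -\omega^*/v = -k^*$ and similarly $k_b\mapsto -k_b^*$, so that $\tilde k^2 = (k^*)^2 = (k^2)^*$. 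Since the Laplacian has real coefficients, conjugating the interior and exterior Helmholtz equations in \eqref{eq:helmholtz_equation} gives $(\Delta + (k^2)^*)u^* = 0$ and $(\Delta + (k_b^2)^*)u^*=0$, which are exactly the equations at frequency $-\omega^*$. The transmission conditions \eqref{eq:transmission} have real coefficients $1/\rho$, $1/\rho_b$ and involve no explicit factor of $k$, so they are preserved under conjugation as well.

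The main obstacle, and the step deserving genuine care, is the radiation condition: the Sommerfeld condition is \emph{not} invariant under complex conjugation, since conjugating the outgoing Green's function $G(x,k)=-e^{\i k|x|}/(4\pi|x|)$ produces $e^{-\i k^*|x|}$, which is incoming rather than outgoing. The resolution is that conjugation must be composed with the sign flip $k\mapsto -k^*$ built into the definition of $-\omega^*$: one checks that $G(x,-k^*)^* \cdot{}$ -- more precisely, that $\overline{G(x-y,k)} = G(x-y,-k^*)$, because $\overline{-e^{\i k|x-y|}/(4\pi|x-y|)} = -e^{-\i k^*|x-y|}/(4\pi|x-y|) = -e^{\i(-k^*)|x-y|}/(4\pi|x-y|)$. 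Thus conjugating an outgoing field at wavenumber $k$ yields an outgoing field at wavenumber $-k^*$, so $\tilde u=u^*$ does satisfy the correct (outgoing) radiation condition at the frequency $-\omega^*$. This is precisely why the symmetry sends $\omega$ to $-\omega^*$ rather than to $\omega^*$.

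Having established that $\tilde u = u^*$ is a non-trivial solution of the homogeneous problem at frequency $-\omega^*$, it remains to verify the two conditions in the definition of a subwavelength resonant frequency. Condition (i) is met by construction, and non-triviality of $\tilde u$ follows from non-triviality of $u$. For condition (ii), I would argue that the map $\omega\mapsto -\omega^*$ is continuous and preserves the limit behaviour: if $\omega(\delta)$ depends continuously on $\delta$ with $\omega(\delta)\to 0$ as $\delta\to0$, then so does $-\omega(\delta)^*$, since conjugation and negation are continuous and fix the origin. Finally, for the statement about multiplicity, the map $u\mapsto u^*$ is an $\mathbb{R}$-linear, bijective, involutive correspondence between the space of solutions at $\omega$ and the space of solutions at $-\omega^*$; running the same conjugation argument in reverse shows the two solution spaces have equal dimension, giving equality of multiplicities. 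I would present the argument compactly, emphasising the conjugation-plus-reflection bookkeeping on the Green's function as the crux and treating the continuity and multiplicity claims as short corollaries.
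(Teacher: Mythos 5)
The paper itself offers no proof of this lemma: it is imported verbatim from the scattering-theory literature (the citation to \cite{dyatlov2019mathematical} immediately before the statement), so there is no internal argument to compare against, and your proposal supplies exactly the standard conjugation-plus-reflection argument that underlies that citation, adapted to the two-density transmission problem at hand. Its core is correct, and you put your finger on the genuine crux: conjugation alone reverses the sense of radiation, but composed with the reflection $k\mapsto -k^*$ it preserves it, because $\overline{G(x-y,k)}=G(x-y,-k^*)$ for the kernel \eqref{eq:green}. Equivalently, at the level of layer potentials, $\overline{\S_D^k[\psi]}=\S_D^{-k^*}[\overline{\psi}\,]$, so a resonant state at $\omega$ represented as in \eqref{eq:layer_potential_representation} is carried to a resonant state at $-\omega^*$ that is again \emph{outgoing}. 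This formulation also quietly resolves a point you did not make explicit: for resonances with $\Im\omega<0$ the classical Sommerfeld limit does not literally apply (resonant states grow at infinity), and the outgoing condition is properly encoded by the representation through the outgoing Green's function — which is precisely the object your identity transports. The verification of the Helmholtz equations (via $(-k^*)^2=(k^2)^*$), the real-coefficient transmission conditions \eqref{eq:transmission}, and condition (ii) of the definition are all routine, as you say.

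The one step that needs strengthening is the multiplicity claim. Your antilinear bijection $u\mapsto \overline{u}$ between solution spaces shows equality of \emph{geometric} multiplicities (antilinear bijections do preserve complex dimension), but in the framework this paper relies on — the Gohberg--Sigal theory invoked to prove the existence lemma that follows — the multiplicity of a resonance is the algebraic multiplicity of a characteristic value of the operator-valued function $\mathcal{A}(\omega,\delta)$ arising from the boundary-integral formulation, and this can in general exceed the dimension of the kernel. The repair is a direct extension of your own identity: since $\overline{\S_D^k[\psi]}=\S_D^{-k^*}[\overline{\psi}\,]$, and analogously for $\K_D^{k,*}$, one has $\mathcal{A}(-\omega^*,\delta)=C\,\mathcal{A}(\omega,\delta)\,C$, where $C$ is the antilinear involution $\Phi\mapsto\overline{\Phi}$ on $L^2(\D)\times L^2(\D)$. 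Conjugating the Laurent expansion of $\mathcal{A}^{-1}$ (or the Gohberg--Sigal trace integral defining the multiplicity) by $C$, together with the orientation reversal induced by $\omega\mapsto-\omega^*$ on contours, transports the entire pole structure and not merely the kernel, giving equality of algebraic multiplicities. With that amendment your proof is complete.
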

The existence of subwavelength resonant frequencies is given by the following lemma, which was proved in \cite{davies2019fully} using the asymptotic theory of Gohberg and Sigal \cite{gohberg2009holomorphic, ammari2018mathematical}.

\begin{lemma}
	A system of $N$ subwavelength resonators exhibits $N$ subwavelength resonant frequencies with positive real parts, up to multiplicity.
\end{lemma}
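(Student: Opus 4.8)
The plan is to recast the resonance condition as the vanishing of a holomorphic operator family and then apply Gohberg--Sigal theory. First I would represent a candidate resonant mode by single layer potentials,
\[
u = \S_{D}^{k_b}[\psi] \ \text{ in } D, \qquad u = \S_{D}^{k}[\phi] \ \text{ in } \outside,
\]
with $k=\omega/v$ and $k_b=\omega/v_b$, which automatically solves \eqref{eq:helmholtz_equation} and the radiation condition. Imposing the transmission conditions \eqref{eq:transmission} with $u^{in}=0$, multiplying the flux condition by $\rho$ and using $\delta=\rho_b/\rho$, gives the boundary system
\[
\begin{aligned}
\S_{D}^{k}[\phi]-\S_{D}^{k_b}[\psi] &= 0, \\
\delta\left(\tfrac12 I+\K_{D}^{k,*}\right)[\phi]-\left(-\tfrac12 I+\K_{D}^{k_b,*}\right)[\psi] &= 0,
\end{aligned}
\]
on $\D$. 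Writing this as $\A(\omega,\delta)(\phi,\psi)^\top=0$ for an operator $\A(\omega,\delta)$ on $L^2(\D)\times L^2(\D)$, the subwavelength resonant frequencies are exactly the characteristic values of $\omega\mapsto\A(\omega,\delta)$ that tend to $0$ as $\delta\to0$.

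Next I would examine the limit $\A(0,0)$. Since $\S_{D}^{0}=\S_{D}$ and $\K_{D}^{0,*}=\K_{D}^{*}$, a pair lies in its kernel precisely when $\S_{D}[\phi]=\S_{D}[\psi]$ and $(-\tfrac12 I+\K_{D}^{*})[\psi]=0$. Invertibility of $\S_{D}\colon L^2(\D)\to H^1(\D)$ forces $\phi=\psi$, so $\ker\A(0,0)\cong\ker(-\tfrac12 I+\K_{D}^{*})$. The functions $\psi_i:=\S_{D}^{-1}[\chi_{\D_i}]$, $i=1,\dots,N$, make $\S_{D}[\psi_i]$ constant on each connected component of $D$, so the interior normal derivative vanishes and the jump relation \eqref{eq:jump} gives $(-\tfrac12 I+\K_{D}^{*})[\psi_i]=0$; these are independent and span the kernel, so $\dim\ker\A(0,0)=N$. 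Because $\omega$ enters only through $\omega^2$ (via $k^2,k_b^2$), the degeneracy at $\omega=0$ is of second order, and I expect the Gohberg--Sigal characteristic multiplicity of $\omega=0$ to be $2N$ rather than $N$.

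Using the expansions \eqref{eq:S_expansion} and \eqref{eq:K_expansion}, $\A$ is holomorphic in $\omega$ near $0$, depends continuously on $\delta$, and is Fredholm of index zero. Fixing a small contour around $\omega=0$ that excludes all other characteristic values at $\delta=0$, the generalized Rouch\'e theorem of Gohberg and Sigal guarantees that for all sufficiently small $\delta$ the family $\A(\cdot,\delta)$ has the same total multiplicity of characteristic values inside the contour; these depend continuously on $\delta$ and converge to $0$, so they are exactly the subwavelength resonances. To locate them, I would expand $\psi=\sum_i c_i\psi_i$ and $\phi=\psi+O(\omega)$, integrate the flux equation over each $\D_i$, and simplify using the identities \eqref{K_properties}: the $-\tfrac12 I+\K_{D}^{*}$ contribution vanishes identically, $\int_{\D_i}\K_{D,2}[\psi]$ reduces to $-c_i|D_i|$, and $\int_{\D_i}\phi$ reduces to $-(Cc)_i$, where $C$ is the capacitance matrix $C_{ij}=-\int_{\D_i}\psi_j\de\sigma$. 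This collapses the condition to the finite-dimensional generalized eigenvalue problem $\omega^2 V c=\delta\, v_b^2\, C c$, with $V=\mathrm{diag}(|D_1|,\dots,|D_N|)$.

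Since $C$ is symmetric positive-definite and $V$ is positive diagonal, $V^{-1}C$ has $N$ positive eigenvalues $\lambda_1,\dots,\lambda_N$, whence $\omega^2=\delta\, v_b^2\,\lambda_n+o(\delta)$ and $\omega=\pm v_b\sqrt{\delta\lambda_n}+o(\sqrt\delta)$. This accounts for $2N$ subwavelength resonances, of which exactly $N$ have positive real part (the others being their reflections $-\omega^*$ supplied by the preceding symmetry lemma), proving the claim. I expect the main obstacle to be the Gohberg--Sigal step: rigorously verifying the analytic-Fredholm hypotheses, confirming that the characteristic multiplicity at the origin is exactly $2N$, and ruling out any characteristic value leaving the contour as $\delta$ varies --- together with checking that the reduced matrix $V^{-1}C$ is genuinely positive-definite so that the leading-order frequencies are nonzero and have positive real part.
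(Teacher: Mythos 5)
Your proposal is correct and follows essentially the same route as the paper, which states this lemma without an internal proof and instead cites a Gohberg--Sigal argument from the reference \cite{davies2019fully}: layer-potential representation, reduction to characteristic values of a holomorphic Fredholm family, generalized Rouch\'e theorem around the multiplicity-$2N$ characteristic value at $\omega=0$, and leading-order reduction to the generalized capacitance eigenvalue problem $\omega^2 V c=\delta v_b^2 C c$ --- precisely the machinery the paper itself deploys in Lemma~\ref{lem:modal} and Theorem~\ref{thm:res}. The two points you flag as obstacles (the characteristic multiplicity being exactly $2N$, and positive-definiteness of the capacitance matrix so that the $N$ frequencies $+v_b\sqrt{\delta\lambda_n}$ genuinely have positive real part) are exactly the computations carried out in that cited proof, so your outline is faithful to it.
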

We will use the notation $\omega_n^\pm$ to denote the resonant frequencies for $n=1,\dots,N$, where $\Re(\omega_n^+)>0$ and $\omega_n^-=-(\omega_n^+)^*$.

\subsection{The capacitance matrix}

Our approach to finding the system's resonant frequencies is to study the \emph{generalized capacitance matrix}, which offers a rigorous discrete approximation to the differential problem. We will see that the eigenstates of this $N\times N$-matrix characterise, at leading order in $\delta$, the resonant modes of the system. This concise characterisation of the scattering problem will be the key that allows us to fine tune the resonator array such that it replicates the action of the cochlea. This approach has previously been used for various high-contrast settings, including non-Hermitian and time-modulated systems. For a comprehensive review see \cite{ammari2021capacitance}.

In order to introduce the notion of capacitance, we define the functions $\psi_j$, for $j=1,\dots,N$, as 
\begin{equation}
\psi_j:=\S_D^{-1}[\chi_{\D_j}],
\end{equation}
where $\S_D^{-1}$ is the inverse of $\S_D:L^2(\D)\to H^1(\D)$ and $\chi_A:\mathbb{R}^3\to\{0,1\}$ is used to denote the characteristic function of a set $A\subset\mathbb{R}^3$.	The capacitance matrix $C=(C_{ij})$ is defined, for $i,j=1,\dots,N$, as
\begin{equation}
C_{ij}:=-\int_{\D_i} \psi_j\de\sigma.
\end{equation}
It is known that $C$ is a symmetric matrix \cite{ammari2018mathematical}. In order to capture the behaviour of an asymmetric array of resonators we need to introduce the \emph{generalized} capacitance matrix $\C=(\C_{ij})$, given by
\begin{equation} \label{GCMdefn}
\C_{ij}:=\frac{1}{|D_i|} C_{ij},
\end{equation}
which accounts for the differently sized resonators.

\begin{remark} \label{rmk:Cvol_symposdef}
	While the weighting in $\C$ hides some its underlying structure, it is the product of the symmetric matrix $C$ with the diagonal matrix of inverse volumes. This means, in particular, that it has a basis of eigenvectors.
\end{remark}

\begin{remark}
	Capacitance coefficients have a long history of applications in electrostatics, where they are used to relate the potentials and charges on a system of conductors \cite{lekner2011capacitance}. While the interpretation is slightly different here, the intuition that they represent the extent of the interactions between each pair of objects in a many-body system remains the same (in our case, this is at leading order in the contrast parameter $\delta$).
\end{remark}

\begin{remark}
The generalized capacitance matrix $\C$ is often defined with the material parameters included in the weights in \eqref{GCMdefn}, such that it is able to describe systems with different material parameters inside each material inclusion \cite{ammari2021capacitance}. We omit this here since we are considering a structure where only the size of the inclusions is varied. However, this generalized capacitance matrix approximation shows that similar behaviour could be obtained by grading the material parameters instead.  
\end{remark}

Let us define the functions $S_n^\omega$, for $n=1\dots,N$, as
\begin{equation}
S_n^\omega(x) := \begin{cases}
\S_{D}^{k}[\psi_n](x), & x\in\outside,\\
\S_{D}^{k_b}[\psi_n](x), & x\in D.\\
\end{cases}
\end{equation}
We will express the solutions to the Helmholtz scattering problem \eqref{eq:helmholtz_equation} as a decomposition in terms of these functions. The significance of the capacitance matrix $\C$ will become apparent through this approach.

\begin{lemma} \label{lem:modal}
	The solution to the scattering problem \eqref{eq:helmholtz_equation} can be written, for $x\in\mathbb{R}^3$ and incoming wave $u^{in}=Ae^{\i kx_1}$, as
	\begin{equation*} 
	u(x)-Ae^{\i kx_1} = \sum_{n=1}^N q_nS_n^\omega(x) - \S_D^k\left[\S_D^{-1}[Ae^{\i kx_1}]\right](x) + O(\omega),
	\end{equation*} 
	for constants $q_n$ which satisfy, up to an error of order $O(\delta \omega+\omega^3)$, the problem
	\begin{equation*}\label{eq:eval_C}
	\left({\omega^2}I_N-{v_b^2\delta}\,\C\right)\begin{pmatrix}q_1\\ \vdots\\q_N\end{pmatrix}
	=
	{v_b^2\delta}\begin{pmatrix} \frac{1}{|D_1|} \int_{\D_1}\S_D^{-1}[Ae^{\i kx_1}]\de\sigma \\ \vdots\\
	\frac{1}{|D_N|}\int_{\D_N}\S_D^{-1}[Ae^{\i kx_1}]\de\sigma \end{pmatrix},
	\end{equation*}
	where $I_N$ is the $N\times N$ identity matrix.
\end{lemma}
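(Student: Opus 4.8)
The plan is to represent the scattered field by single layer potentials, turn the transmission problem \eqref{eq:helmholtz_equation}--\eqref{eq:transmission} into a pair of boundary integral equations, and then collapse these onto the claimed $N\times N$ system by integrating over each resonator and invoking the identities \eqref{K_properties}. First I would seek
\[
u=\begin{cases}Ae^{\i kx_1}+\S_D^k[\phi] & \text{in }\outside,\\[2pt] \S_D^{k_b}[\psi] & \text{in }D,\end{cases}
\]
for unknown densities $\phi,\psi\in L^2(\D)$; the use of the outgoing Green's function \eqref{eq:green} ensures the radiation condition is met. Imposing the transmission conditions \eqref{eq:transmission} and using the jump relations \eqref{eq:jump} gives, on $\D$,
\[
\S_D^{k_b}[\psi]-\S_D^k[\phi]=Ae^{\i kx_1},
\]
\[
\left(-\tfrac12 I+\K_D^{k_b,*}\right)[\psi]-\delta\left(\tfrac12 I+\K_D^{k,*}\right)[\phi]=\delta\,\ddp{}{\nu}\!\left(Ae^{\i kx_1}\right),
\]
where I have used $\delta=\rho_b/\rho$.

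Next I would read off the leading-order structure. Expanding with \eqref{eq:S_expansion} and \eqref{eq:K_expansion} (recall $\K_{D,1}=0$), the flux equation at order $O(1)$ is $(-\tfrac12 I+\K_D^{*})[\psi]=0$. Because $\psi_n=\S_D^{-1}[\chi_{\D_n}]$ makes $\S_D[\psi_n]$ equal to its constant boundary value on each component, so that $\S_D[\psi_n]$ is the constant $1$ on $D_i$ when $i=n$ and $0$ otherwise, the relation \eqref{eq:jump} at $k=0$ gives $(-\tfrac12 I+\K_D^{*})[\psi_n]=\ddp{}{\nu}\S_D[\psi_n]\big|_-=0$; hence $\{\psi_n\}$ spans this kernel and $\psi=\sum_{n}q_n\psi_n+O(\omega^2)$ for coefficients $q_n$, which defines the $q_n$. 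The continuity equation then yields $\phi=\psi-\S_D^{-1}[Ae^{\i kx_1}]+O(\omega)$. Substituting both into the representation, and recalling $S_n^\omega=\S_D^k[\psi_n]$ in $\outside$ and $\S_D^{k_b}[\psi_n]$ in $D$, reproduces the stated modal decomposition, once one checks that inside $D$ one has $\S_D^k[\S_D^{-1}[Ae^{\i kx_1}]]=Ae^{\i kx_1}+O(\omega)$ (because $Ae^{\i kx_1}$ is harmonic up to $O(k^2)$, so its single layer potential agrees with it to that order).

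To obtain the algebraic system I would integrate the flux equation over each $\D_i$. The first identity in \eqref{K_properties} kills $\int_{\D_i}(-\tfrac12 I+\K_D^{*})[\psi]\de\sigma$, so the leading left-hand contribution is the $k_b^2\K_{D,2}$ term, which by the third identity equals $-k_b^2\int_{D_i}\S_D[\psi]\de x=-k_b^2 q_i|D_i|=-(\omega^2/v_b^2)q_i|D_i|$, using $\int_{D_i}\S_D[\psi]\de x=q_i|D_i|$ from the constancy noted above. On the right, the second identity gives $\delta\int_{\D_i}(\tfrac12 I+\K_D^{*})[\phi]\de\sigma=\delta\int_{\D_i}\phi\de\sigma=-\delta\sum_n C_{in}q_n-\delta\int_{\D_i}\S_D^{-1}[Ae^{\i kx_1}]\de\sigma$, since $C_{in}=-\int_{\D_i}\psi_n\de\sigma$. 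Multiplying through by $-v_b^2/|D_i|$ and recalling $\C_{in}=C_{in}/|D_i|$ rearranges exactly into $(\omega^2 I_N-v_b^2\delta\,\C)q=v_b^2\delta\,b$, with $b_i=|D_i|^{-1}\int_{\D_i}\S_D^{-1}[Ae^{\i kx_1}]\de\sigma$ and $q=(q_1,\dots,q_N)^\top$.

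The main obstacle is the careful bookkeeping of orders in the subwavelength regime $\omega=O(\sqrt\delta)$, where $\omega^2$ and $\delta$ are comparable. I would have to check that every discarded contribution really is $O(\delta\omega+\omega^3)$: the $k_b^3\K_{D,3}$ term is $O(\omega^3)$; the terms $\delta k^2\K_{D,2}[\phi]$ and $\delta\int_{\D_i}\ddp{}{\nu}(Ae^{\i kx_1})\de\sigma=-\delta k^2\int_{D_i}Ae^{\i kx_1}\de x$ are $O(\delta\omega^2)=O(\delta\omega)$; and the $O(\omega^2)$ correction to $\psi$ contributes at $O(\delta\omega)$ once it is multiplied by $\delta$ on the right. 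Alongside this, the representation must be well posed, which rests on the invertibility of $\S_D:L^2(\D)\to H^1(\D)$ and of $\S_D^{k_b}$ for small $k_b$, and on the fact that $\{\psi_n\}$ is exactly an $N$-dimensional basis of $\ker(-\tfrac12 I+\K_D^{*})$. Verifying that these errors, together with those from \eqref{eq:S_expansion} and \eqref{eq:K_expansion}, all lie within $O(\delta\omega+\omega^3)$ is the technical heart of the argument.
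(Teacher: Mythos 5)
Your proposal is correct and takes essentially the same route as the paper's own proof: the same single layer potential representation (with the names of the interior and exterior densities merely swapped), the same low-frequency expansions identifying $\ker\left(-\tfrac12 I+\K_D^{*}\right)$ and the ansatz in the basis $\{\psi_n\}$, and the same integration over each $\D_i$ via the identities \eqref{K_properties} to assemble the capacitance system. The extra details you supply (the constancy of $\S_D[\psi_n]$ inside each $D_i$, the interior consistency check for the $\S_D^k\left[\S_D^{-1}[Ae^{\i kx_1}]\right]$ term, and the explicit error bookkeeping) are correct refinements of the paper's terser argument.
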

\begin{proof}
	The solutions can be represented as 
	\begin{equation} \label{eq:layer_potential_representation}
	u(x) = \begin{cases}
	Ae^{\i kx_1}+\S_{D}^{k}[\psi](x), & x\in\outside,\\
	\S_{D}^{k_b}[\phi](x), & x\in D,
	\end{cases}
	\end{equation} 
	for some surface potentials $(\phi,\psi)\in L^2(\D)\times L^2(\D)$, which must be chosen so that $u$ satisfies the transmission conditions across $\D$. 
	
	Using \eqref{eq:jump}, we see that in order to satisfy the transmission conditions on $\D$ the densities $\phi$ and $\psi$ must satisfy, for $x\in \D$,
	\begin{align*}
	\S_{D}^{k_b}[\phi](x)-\S_{D}^{k}[\psi](x)&=Ae^{\i kx_1}, \\
	\left(-\frac{1}{2}I+\K_{D}^{k_b,*}\right)[\phi](x)-\delta\left(\frac{1}{2}I+\K_{D}^{k,*}\right)[\psi](x)&=\delta \ddp{}{\nu}(Ae^{\i kx_1}).
	\end{align*}
	Using the asymptotic expansions \eqref{eq:S_expansion} and \eqref{eq:K_expansion}, we can see that 
	\begin{equation*}\label{eq:psi}
	\psi=\phi-\S_D^{-1}[Ae^{\i kx_1}]+O(\omega),
	\end{equation*} 
	and, further, that up to an error of order $O(\delta\omega+\omega^3)$
	\begin{equation} 
	\left(-\frac{1}{2}I+\K_D^*+\frac{\omega^2}{{v}_b^2}\K_{D,2}-\delta\left(\frac{1}{2}I+\K_D^*\right)\right)[\phi]=-\delta \left(\frac{1}{2}I+\K_D^*\right)\S_D^{-1}[Ae^{\i kx_1}]. \label{eq:phi}
	\end{equation}
	At leading order, \eqref{eq:phi} says that $\left(-\frac{1}{2}I+\K_D^{*}\right)[\phi]=0$ so, in light of the fact that $\{\psi_1,\dots,\psi_N\}$ forms a basis for $\ker\left(-\frac{1}{2}I+\K_D^{*}\right)$, the solution can be written as
	\begin{equation} \label{eq:psi_basis}
	\phi=\sum_{n=1}^N q_n\psi_n+O(\omega^2+\delta),
	\end{equation}
	for constants $q_1,\dots,q_N=O(1)$.
	Then, integrating \eqref{eq:phi} over $\D_i$, for $1\leq i\leq N$, and using the properties \eqref{K_properties} gives us that
	\begin{equation*}
	-\omega^2\int_{D_i}\S_D[\phi]\de x -{v}_b^2\delta\int_{\D_i}\phi\de\sigma=-{v_b^2\delta}\int_{\D_i}\S_D^{-1}[Ae^{\i kx_1}]\de\sigma+O(\delta\omega+\omega^3).  \label{eq:D}
	\end{equation*}
	Substituting the ansatz \eqref{eq:psi_basis} gives the desired result. \qed
\end{proof}

The resonant modes of the system are non-trivial solutions in the case that $u^{in}=0$. Thus, from Lemma~\ref{lem:modal}, we can see that resonance occurs when $\omega^2/v_b^2\delta$ is an eigenvalue of $\C$, at leading order. We can continue this argument to higher orders and obtain the following theorem, which characterises the subwavelength resonant frequencies of the system.

\begin{theorem} \label{thm:res}
	As $\delta \rightarrow 0$, the subwavelength resonant frequencies satisfy the asymptotic formula
	$$\omega_n^\pm = \pm\sqrt{v_b^2\lambda_n\delta} -\i\tau_n\delta+ O(\delta^{3/2}),$$
	for  $n = 1,\dots,N$, where $\lambda_n$ are the eigenvalues of the generalized capacitance matrix $\C$ and $ \tau_n$ are non-negative real numbers given by 
	\begin{equation*}
	\tau_n= \frac{v_b^2}{8\pi v} \frac{\underline{v}_n\cdot C J C\underline{v}_n}{\| \underline{v}_n\|_D^2},
	\end{equation*}
	where $C$ is the capacitance matrix, $J$ is the $N\times N$ matrix of ones, $\underline{v}_n$ is the eigenvector associated to $\lambda_n$ and we use the norm $\| x\|_D:=\big(\sum_{i=1}^N |D_i| x_i^2\big)^{1/2}$.
\end{theorem}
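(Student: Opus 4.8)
**

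The plan is to refine the leading-order analysis of Lemma~\ref{lem:modal} by carrying the asymptotic expansion one order further in $\omega$, so that the imaginary part of the resonant frequency emerges at order $\delta$. Setting $u^{in}=0$, resonance is governed by the homogeneous version of \eqref{eq:phi}. The starting point is the observation already extracted from Lemma~\ref{lem:modal}: at leading order $\omega^2 = v_b^2\lambda_n\delta$, where $\lambda_n$ is an eigenvalue of $\C$ with eigenvector $\underline{v}_n$ and $\phi = \sum_n (v_n)_j \psi_j + O(\omega^2+\delta)$. This already yields the real part $\pm\sqrt{v_b^2\lambda_n\delta}$. To obtain the correction, I would retain the $k^3$ term $\K_{D,3}$ in the expansion \eqref{eq:K_expansion} of the Neumann--Poincar\'e operator, which was dropped in the proof of Lemma~\ref{lem:modal} but contributes at precisely the order needed to see the imaginary shift.

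Concretely, I would write $\omega = \omega^{(0)} + \omega^{(1)}$ with $\omega^{(0)} = \pm\sqrt{v_b^2\lambda_n\delta} = O(\delta^{1/2})$ and an anticipated correction $\omega^{(1)} = O(\delta)$, and correspondingly perturb the eigenvector $\underline{v}_n$. The homogeneous equation, after integrating over each $\partial D_i$ and applying the identities \eqref{K_properties}, becomes
\begin{equation*}
\left(\omega^2 I_N - v_b^2\delta\,\C\right)\underline{q} = (\text{correction terms from } \K_{D,3} \text{ and } \S_{D,1}),
\end{equation*}
where the right-hand side now contains the order-$\delta^{3/2}$ contributions. Using the fourth identity in \eqref{K_properties}, namely $\int_{\partial D_i}\K_{D,3}[\varphi]\de\sigma = \tfrac{\i|D_i|}{4\pi}\int_{\partial D}\varphi\de\sigma$, together with $k = \omega/v$, the cubic term produces a factor involving $J$, the matrix of ones, acting on $\underline{q}$. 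The key algebraic step is then a solvability (Fredholm) condition: since $\omega^{(0)}$ makes $\omega^2 I_N - v_b^2\delta\,\C$ singular in the direction $\underline{v}_n$, projecting the perturbed equation onto $\underline{v}_n$ (in the $\|\cdot\|_D$ inner product, under which $\C$ is self-adjoint by Remark~\ref{rmk:Cvol_symposdef}) isolates $\omega^{(1)}$. Carrying out this projection and solving for the imaginary part gives
\begin{equation*}
\tau_n = \frac{v_b^2}{8\pi v}\,\frac{\underline{v}_n\cdot C J C\,\underline{v}_n}{\|\underline{v}_n\|_D^2},
\end{equation*}
the combination $CJC$ arising because the $\K_{D,3}$ term contributes $J$ while the surrounding capacitance structure contributes the two factors of $C$ from the relation between $\psi_n$ and the boundary integrals.

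The main obstacle I expect is bookkeeping the orders correctly: one must be careful that $\K_{D,3}$ scales like $k^3 = O(\delta^{3/2})$, so its contribution to the eigenvalue equation (which is balanced at order $\omega^2 = O(\delta)$) enters exactly at the order producing the $O(\delta)$ imaginary shift, while simultaneously checking that the $\S_{D,1}$ correction and the perturbation of the eigenvector do not contaminate this order. A secondary subtlety is verifying that $\tau_n \geq 0$: since $J = \underline{e}\,\underline{e}^\top$ with $\underline{e}$ the all-ones vector, one has $\underline{v}_n\cdot CJC\underline{v}_n = (\underline{e}\cdot C\underline{v}_n)^2 \geq 0$, using the symmetry of $C$, which confirms non-negativity and matches the physical requirement that resonances lie in the lower half-plane (damping via radiation). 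Finally, the self-adjointness of $\C$ with respect to $\|\cdot\|_D$ must be invoked to guarantee that the Fredholm projection is well-defined and that the eigenvectors can be chosen to diagonalise the leading operator, which is exactly what Remark~\ref{rmk:Cvol_symposdef} provides.
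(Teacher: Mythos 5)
Your proof skeleton is the same as the paper's: extend the expansion behind Lemma~\ref{lem:modal} by one order, reduce the homogeneous problem to a perturbed $N\times N$ eigenvalue problem, extract the $O(\delta)$ correction by projecting onto $\underline{v}_n$ (legitimate because $\C$ is self-adjoint in the $\|\cdot\|_D$ inner product, as you note), and obtain $\tau_n\geq0$ from the symmetry of $C$ plus positive semi-definiteness of $J$ --- this last argument is essentially verbatim the paper's. The genuine gap is in which terms you keep at order $\delta^{3/2}$: as described, your computation would not produce the coefficient $\frac{v_b^2}{8\pi v}$.

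Two concrete problems. First, $\K_{D,3}$ enters through $\K_D^{k_b,*}[\phi]$, i.e.\ with the \emph{interior} wavenumber $k_b=\omega/v_b$, not with $k=\omega/v$ as you write (the exterior operator $\K_D^{k,*}$ is premultiplied by $\delta$, so its cubic correction is $O(\delta\omega^3)$ and negligible); on its own it contributes $-\frac{\omega^3\i}{4\pi v_b^3}JC\,\underline{q}$ and can only ever produce powers of $1/v_b$. Second, your plan to ``check that the $\S_{D,1}$ correction does not contaminate this order'' is backwards: it contributes exactly at this order, and is indispensable. The continuity condition $\S_D^{k_b}[\phi]=\S_D^{k}[\psi]$ on $\D$ forces, through $\S_{D,1}$,
\begin{equation*}
\psi=\phi+\frac{k_b-k}{4\pi\i}\Big(\sum_{m=1}^N\psi_m\Big)\int_{\D}\phi\de\sigma+O(\omega^2),
\end{equation*}
and feeding this into the term $\delta\big(\tfrac{1}{2}I+\K_D^{k,*}\big)[\psi]$ of the second transmission condition yields an additional contribution $\frac{\delta\omega\i}{4\pi}\big(\frac{1}{v_b}-\frac{1}{v}\big)\C JC\,\underline{q}$, which is also $O(\delta^{3/2})$. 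Writing $W=\mathrm{diag}(|D_1|,\dots,|D_N|)$ and projecting the resulting matrix equation with $\underline{v}_n^\top W$, the identities $W\C=C$, $W\underline{v}_n=\lambda_n^{-1}C\underline{v}_n$ (hence $\underline{v}_n^\top WJC\underline{v}_n=\lambda_n^{-1}\,\underline{v}_n\cdot CJC\underline{v}_n$) and $\omega^2=v_b^2\lambda_n\delta+O(\delta^2)$ show that the $1/v_b$ contributions of the two terms cancel, leaving
\begin{equation*}
\omega^2+\frac{v_b^2\delta\omega\i}{4\pi v}\,\frac{\underline{v}_n\cdot CJC\underline{v}_n}{\|\underline{v}_n\|_D^2}-v_b^2\lambda_n\delta=O(\delta^2),
\end{equation*}
whose roots give exactly $\tau_n=\frac{v_b^2}{8\pi v}\frac{\underline{v}_n\cdot CJC\underline{v}_n}{\|\underline{v}_n\|_D^2}$. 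Keeping only the $\K_{D,3}$ term would give $\frac{v_b}{8\pi}$ in place of $\frac{v_b^2}{8\pi v}$; attaching $k=\omega/v$ to the cubic term instead would give $\frac{v_b^4}{8\pi v^3}$. Physically, the imaginary part is radiative damping into the exterior medium, so it must involve $v$, and the only channel through which $v$ can enter is precisely the $\psi$--$\phi$ coupling you proposed to discard.
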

\begin{proof}
	If $u^{in} = 0$, we find from Lemma~\ref{lem:modal} that there is a non-zero solution $q_1,\dots,q_N$ to the eigenvalue problem \eqref{eq:eval_C} when $\omega^2/v_b^2\delta$ is an eigenvalue of $\C$, at leading order.
	
	To find the imaginary part, we adopt the ansatz
	\begin{equation} \label{eq:omega_ansatz}
	\omega_n^\pm=\pm\sqrt{v_b^2\lambda_n\delta} -\i\tau_n\delta+ O(\delta^{3/2}),
	\end{equation}
	where $\lambda_n$ is an eigenvalue of $\C$ and $\tau_n$ is a real number. Using the expansions \eqref{eq:S_expansion} and \eqref{eq:K_expansion} with the representation \eqref{eq:layer_potential_representation} we have that
	\begin{equation*}
	\psi=\phi+\frac{k_b-k}{4\pi\i}\left(\sum_{n=1}^N\psi_n\right)\int_{\D}\phi\de\sigma+O(\omega^2),
	\end{equation*} 
	and, hence, that
	\begin{equation}
	\begin{split}
	&\left(-\frac{1}{2}I+\K_D^*+k_b^2\K_{D,2}+k_b^3\K_{D,3}-\delta\left(\frac{1}{2}I+\K_D^*\right)\right)[\phi]\\&\qquad\qquad\qquad\qquad\qquad\qquad-\frac{\delta(k_b-k)}{4\pi\i}\left(\sum_{n=1}^N\psi_n\right)\int_{\D}\phi\de\sigma=O(\delta\omega^2+\omega^4).
	\end{split}
	\end{equation}
	We then substitute the decomposition \eqref{eq:psi_basis} and integrate over $\D_i$, for $i=1,\dots,N$, to find that, up to an error of order $O(\delta\omega^2+\omega^4)$, it holds that
	\begin{equation*}
	\bigg(-\frac{\omega^2}{v_b^2}-\frac{\omega^3\i}{4\pi v_b^3}JC+\delta \C+\frac{\delta\omega\i}{4\pi}\bigg(\frac{1}{v_b}-\frac{1}{v}\bigg)\C JC \bigg)
	\underline{q}=0,
	\end{equation*}
	where $J$ is the $N\times N$ matrix of ones (\emph{i.e.} $J_{ij}=1$ for all $i,j=1,\dots,N$). 
	Then, using the ansatz \eqref{eq:omega_ansatz} for $\omega_n^\pm$ we see that, if $\underline{v}_n$ is an eigenvector corresponding to $\lambda_n$, it holds that
	\begin{equation}
	\tau_n= \frac{v_b^2}{8\pi v} \frac{\underline{v}_n\cdot C J C\underline{v}_n}{\| \underline{v}_n\|_D^2},
	\end{equation}
	where we use the norm $\| x\|_D:=\big(\sum_{i=1}^N |D_i| x_i^2\big)^{1/2}$ for $x\in\mathbb{R}^N$. The fact that $\tau_n\geq0$ follows from $C$ being symmetric, since then $\underline{v}_n\cdot C J C\underline{v}_n=(C\underline{v}_n)\cdot J (C\underline{v}_n)$ and $J$ is positive semi-definite. \qed
\end{proof}

\begin{remark}
	Due to the loss of energy (\emph{e.g.} to the far field), the resonant frequencies will have negative imaginary parts. In some cases it will hold that $\tau_n=0$ for some $n$, meaning that the imaginary parts exhibit higher-order behaviour in $\delta$. For example, the imaginary part of the second (dipole) frequency for a pair of identical resonators is known to be $O(\delta^{2})$ \cite{ammari2017double}.
\end{remark}

\begin{figure}
	\centering
	\includegraphics[width=0.8\linewidth]{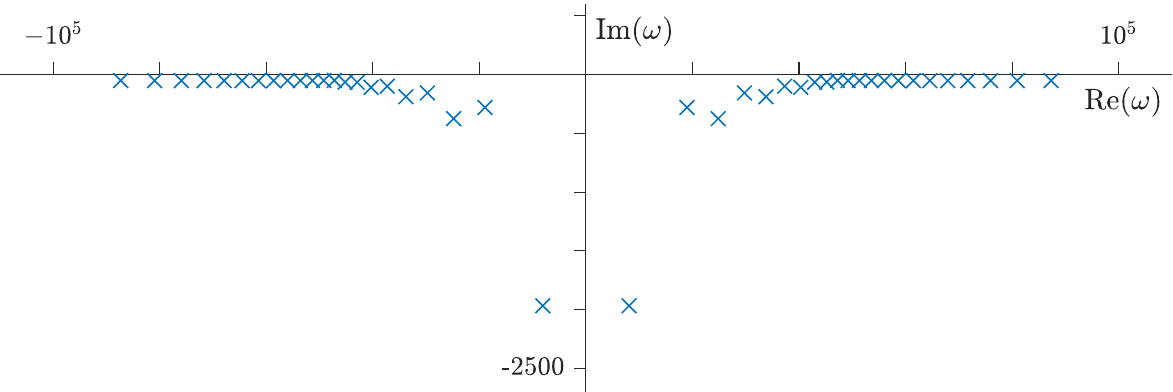}
	\caption{The subwavelength resonant frequencies $\{\omega_n^+,\omega_n^-:n=1,\dots,N\}$ lie in the lower half of the complex plane and are symmetric in the imaginary axis, shown here for an array of 22 subwavelength resonators.} \label{fig:spectrum}
\end{figure}

The resonant frequencies of an array of 22 subwavelength resonators are shown in Fig~\ref{fig:spectrum}. This array measures 35~mm, has material parameters corresponding to air-filled resonators surrounded by water and has subwavelength resonant frequencies within the range 500~Hz~--~15~kHz. Thus, this structure has similar dimensions to the human cochlea, is made from practical materials and experiences subwavelength resonance in response to frequencies that are audible to humans. Structures with these properties have been constructed \emph{e.g.} by injecting gas bubbles into polymer gels \cite{leroy2009design, leroy2009transmission}.

\begin{remark}
	The numerical simulations presented in this work were all carried out on an array of 22 cylindrical resonators. The desired quantities can be approximated by studying the two-dimensional cross section using the multipole expansion method, as outlined in \cite{davies2020hopf}.
\end{remark}

It is more illustrative to rephrase Lemma~\ref{lem:modal} in terms of basis functions that are associated with the resonant frequencies. Let $V=(v_{i,j})$ be the matrix whose columns are the eigenvectors of $\C$. Recalling Remark~\ref{rmk:Cvol_symposdef}, we have that $\C$ has a basis of eigenvectors so $V$ is invertible. Then, we define the functions
\begin{equation}
u_n(x)=\sum_{i=1}^N v_{i,n}\, \S_D[\psi_i](x),
\end{equation}
for $n=1,\dots,N$ and will seek a modal decomposition in terms of these functions. We expect the coefficients to depend on the proximity of the frequency $\omega$ to the system's resonant frequencies $\omega_n^\pm$. With this in mind, we obtain the following lemma by diagonalizing $\C$ (with the change of basis matrix $V$) and solving the resulting system. The result has been simplified further by noticing that $\omega^2-v_b^2\delta\lambda_n=(\omega-\omega_n^+)(\omega-\omega_n^-)+O(\omega^3)$ and that $e^{\i kx_1}=1+\i kx_1+\dots=1+O(\omega)$.

\begin{lemma} \label{lem:modal_res}
	If $\omega=O(\sqrt{\delta})$, the solution to the scattering problem \eqref{eq:helmholtz_equation} with incoming wave $u^{in}=Ae^{\i kx_1}$ can be written, for $x\in\mathbb{R}^3$, as
	\begin{equation*} 
	u(x)-Ae^{\i kx_1} = \sum_{n=1}^N a_n u_n(x) - \S_D\left[\S_D^{-1}[Ae^{\i kx_1}]\right](x) + O(\omega),
	\end{equation*} 
	for constants which satisfy, up to an error of order $O(\omega^3)$, the equations 
	\begin{equation*}
	a_n(\omega-\omega_n^+)(\omega-\omega_n^-)=-A\nu_n\Re(\omega_n^+)^2,
	\end{equation*}
	where $\nu_n=\sum_{j=1}^{N} [V^{-1}]_{n,j}$, {i.e.} the sum of the $n$\textsuperscript{th} row of $V^{-1}$.
\end{lemma}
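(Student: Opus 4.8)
The plan is to derive Lemma~\ref{lem:modal_res} directly from Lemma~\ref{lem:modal} by diagonalizing the linear system that governs the coefficients $q_n$, passing to the eigenbasis of $\C$. Throughout I would exploit that the hypothesis $\omega = O(\sqrt\delta)$ makes $\delta = O(\omega^2)$, so that the error contributions $\delta\omega$, $\delta^{3/2}$ and $\omega^3$ are all of order $O(\omega^3)$. First I would strip the frequency dependence from the single layer potentials: by the expansion \eqref{eq:S_expansion} one has $S_n^\omega(x) = \S_D[\psi_n](x) + O(\omega)$ and $\S_D^k[\S_D^{-1}[Ae^{\i kx_1}]](x) = \S_D[\S_D^{-1}[Ae^{\i kx_1}]](x) + O(\omega)$, so the representation of Lemma~\ref{lem:modal} becomes $u(x) - Ae^{\i kx_1} = \sum_{n=1}^N q_n\S_D[\psi_n](x) - \S_D[\S_D^{-1}[Ae^{\i kx_1}]](x) + O(\omega)$.

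Next I would introduce the change of basis $\underline q = V\underline a$. Since $u_n = \sum_i v_{i,n}\S_D[\psi_i]$, this is precisely the identity that turns $\sum_n q_n\S_D[\psi_n]$ into $\sum_n a_n u_n$, giving the stated form of the decomposition. To obtain the equation for $\underline a$, I would substitute $\underline q = V\underline a$ into the system $(\omega^2 I_N - v_b^2\delta\C)\underline q = v_b^2\delta\,\underline f$ of Lemma~\ref{lem:modal}, where $\underline f$ has components $f_i = \frac{1}{|D_i|}\int_{\D_i}\S_D^{-1}[Ae^{\i kx_1}]\de\sigma$. Using $\C V = V\Lambda$ with $\Lambda = \mathrm{diag}(\lambda_1,\dots,\lambda_N)$ and left-multiplying by $V^{-1}$ (legitimate since $V$ is invertible by Remark~\ref{rmk:Cvol_symposdef}) diagonalizes the left-hand side and yields, componentwise, $a_n(\omega^2 - v_b^2\delta\lambda_n) = v_b^2\delta\,(V^{-1}\underline f)_n$.

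The key computation is then the right-hand side. Because $e^{\i kx_1} = 1 + O(\omega)$ and $\chi_{\D_1} + \dots + \chi_{\D_N} = 1$ on $\D$, I would use $\S_D^{-1}[Ae^{\i kx_1}] = A\sum_j\psi_j + O(\omega)$; then the definition $C_{ij} = -\int_{\D_i}\psi_j\de\sigma$ together with $\C_{ij} = C_{ij}/|D_i|$ gives $f_i = -A\sum_j\C_{ij} + O(\omega)$, that is, $\underline f = -A\,\C\underline 1 + O(\omega)$ with $\underline 1$ the all-ones vector. Commuting $V^{-1}$ past $\C$ via $V^{-1}\C = \Lambda V^{-1}$ and recognising $(V^{-1}\underline 1)_n = \nu_n$ produces $(V^{-1}\underline f)_n = -A\lambda_n\nu_n + O(\omega)$, whence $a_n(\omega^2 - v_b^2\delta\lambda_n) = -A\nu_n\,v_b^2\delta\lambda_n + O(\omega^3)$. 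Finally I would invoke Theorem~\ref{thm:res}: from $\omega_n^\pm = \pm\sqrt{v_b^2\lambda_n\delta} - \i\tau_n\delta + O(\delta^{3/2})$ one checks $v_b^2\delta\lambda_n = \Re(\omega_n^+)^2 + O(\delta^2)$, and, as noted before the statement, $\omega^2 - v_b^2\delta\lambda_n = (\omega-\omega_n^+)(\omega-\omega_n^-) + O(\omega^3)$; absorbing both corrections into the $O(\omega^3)$ error gives the claimed relation $a_n(\omega-\omega_n^+)(\omega-\omega_n^-) = -A\nu_n\Re(\omega_n^+)^2$.

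The main obstacle I anticipate is not a single sharp estimate but the consistent bookkeeping of error orders: one must confirm that replacing each $\S_D^k$ by $\S_D$, truncating $e^{\i kx_1}$ to its constant term, and approximating the eigenvalue relation each contribute errors no larger than $O(\omega^3)$ under $\omega = O(\sqrt\delta)$. The algebraically delicate point is the collapse of the right-hand side through the capacitance matrix down to the clean factor $-A\nu_n\Re(\omega_n^+)^2$, which relies on the identity $\S_D^{-1}[1] = \sum_j\psi_j$ and on the commutation $V^{-1}\C = \Lambda V^{-1}$.
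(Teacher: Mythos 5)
Your proposal is correct and follows exactly the route the paper takes: the paper's own justification (given in the sentence preceding the lemma) is precisely to diagonalize $\C$ with the change of basis matrix $V$, solve the resulting system from Lemma~\ref{lem:modal}, and then simplify using $\omega^2-v_b^2\delta\lambda_n=(\omega-\omega_n^+)(\omega-\omega_n^-)+O(\omega^3)$ and $e^{\i kx_1}=1+O(\omega)$. Your write-up fills in the details of that sketch — in particular the collapse of the right-hand side via $\S_D^{-1}[A]=A\sum_j\psi_j$, $\underline{f}=-A\,\C\underline{1}+O(\omega)$ and $V^{-1}\C=\Lambda V^{-1}$ — faithfully and correctly.
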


From Theorem~\ref{thm:res} and Lemma~\ref{lem:modal_res}, we have results that concisely describe how the high-contrast structure behaves in the subwavelength regime. At the heart of both of these results are the eigenstates of the $N\times N$ matrix $\C$. With this in hand, we are able to make our first biomimetic deduction by adjusting the configuration of the resonator array in order to mimic the spatial frequency separation performed by the cochlea. By manipulating the structure's properties (in particular, the size gradient) the structure can reproduce the well-known relationship between incident frequency and position of maximum excitation in the cochlea. This is a crucial property for a cochlea-like device to exhibit as it is central to cochlear function. This property is a consequence of the asymmetry of the eigenmodes $u_n(x)$. The relationship between frequency and position of maximum excitation is shown in Fig~\ref{fig:tonotopic} along with the corresponding relationship from the human cochlea (see \cite{davies2019fully} for further details, also \cite{davies2020hopf,rupin2019mimicking}).

\begin{figure}
	\centering
	\includegraphics[width=0.85\linewidth]{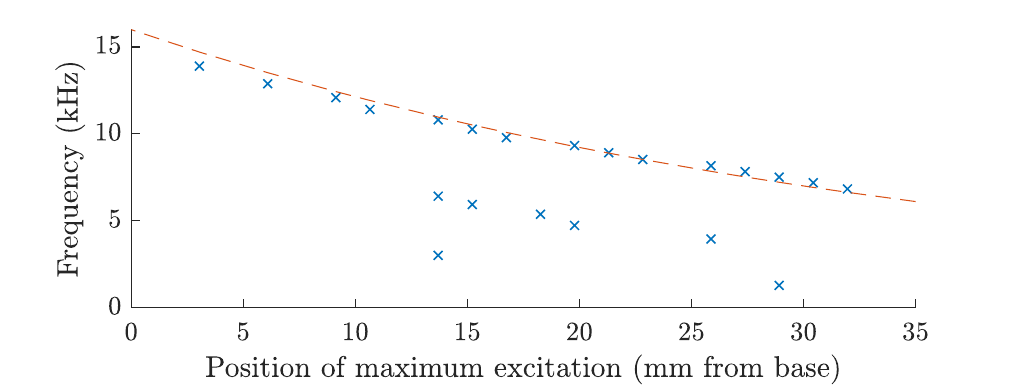}
	\caption{The relationship between frequency and position of maximum excitation in the array of subwavelength resonators can be chosen to match that which exists in the human cochlea. The resonant frequencies are shown here for an array of 22 resonators (crosses), along with the biological relationship for comparison (dashed line).} \label{fig:tonotopic}
\end{figure}

\subsection{Modal decompositions of signals} \label{sec:scattering}

Consider, now, the scattering of a more general signal, $s:[0,T]\to\mathbb{R}$, whose frequency support is wider than a single frequency and whose Fourier transform exists. Again, we assume that the wave is incident parallel to the $x_1$-axis. Consider the Fourier transform of the incoming pressure wave, given for $\omega\in\mathbb{C}$, $x\in\mathbb{R}^3$ by
\begin{align*}
u^{in}(x,\omega)&=\int_{-\infty}^{\infty} s(x_1/v-t) e^{\i\omega t}\de t\\
&=e^{\i\omega x_1/v}\hat{s}(\omega) = \hat{s}(\omega)+O(\omega),
\end{align*}
where $\hat{s}(\omega):=\int_{-\infty}^{\infty} s(u) e^{-\i\omega u}\de u$. The resulting pressure field satisfies the Helmholtz equation \eqref{eq:helmholtz_equation} along with the transmission and radiation conditions.

Working in the frequency domain, the scattered acoustic pressure field $u$ in response to the Fourier transformed signal $\hat s$ can be decomposed in the spirit of Lemma~\ref{lem:modal_res}. We write that, for $x\in\D$, the solution to the scattering problem is given by
\begin{equation} \label{eq:gen_modal_decomp} 
u(x,\omega) = \sum_{n=1}^N \frac{-\hat{s}(\omega)\nu_n\Re(\omega_n^+)^2}{(\omega-\omega_n^+)(\omega-\omega_n^-)} u_n(x) + r(x,\omega),
\end{equation} 
for some remainder $r$. We are interested in signals whose energy is mostly concentrated within the subwavelength regime. In particular, we will consider signals that are \emph{subwavelength} in the sense that
\begin{equation} \label{eq:subw_regime}
\sup_{x\in\mathbb{R}^3}\int_{-\infty}^\infty |r(x,\omega) | \de\omega = O(\delta).
\end{equation}
\begin{remark}
	The subwavelength condition \eqref{eq:subw_regime} is a strong assumption and is difficult to interpret physically. However, for the purpose of seeking to inform signal processing algorithms, which is our aim here, it is a suitable assumption.
\end{remark}

Now, we wish to apply the inverse Fourier transform to \eqref{eq:gen_modal_decomp} to obtain a time-domain decomposition of the scattered field. The condition  \eqref{eq:subw_regime} guarantees that the remainder term is not significant, while the contributions from each term in the expansion can be found through complex integration.

\begin{theorem}[Time-domain modal expansion] \label{thm:timedom}
	For $\delta>0$ and a signal $s$ which is subwavelength in the sense of the condition \eqref{eq:subw_regime}, it holds that the scattered pressure field $p(x,t)$ satisfies, for $x\in\D$, $t\in\mathbb{R}$,
	\begin{equation*} 
	p(x,t)= \sum_{n=1}^N a_n[s](t) u_n(x) + O(\delta),
	\end{equation*}
	where the coefficients are given by $a_n[s](t)=\left( s*h_n \right)(t)$ for kernels defined as
	\begin{equation} \label{eq:hdef}
	h_n(t)=
	\begin{cases}
	0, & t<0, \\
	c_n e^{\Im(\omega_n^+)t} \sin(\Re(\omega_n^+)t), & t\geq0,
	\end{cases}
	\end{equation}
	for $c_n=\nu_n\Re(\omega_n^+)$.
\end{theorem}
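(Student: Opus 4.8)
The plan is to obtain the time-domain field by applying the inverse Fourier transform to the frequency-domain modal decomposition \eqref{eq:gen_modal_decomp} and then to read off the coefficients $a_n[s]$ as convolutions. Writing the scattered pressure as $p(x,t)=\frac{1}{2\pi}\int_{-\infty}^\infty u(x,\omega)e^{-\i\omega t}\de\omega$, consistent with the time-harmonic convention $e^{-\i\omega t}$ used throughout, I would split the integral into the $N$ modal terms plus the remainder $r$. The remainder is the easy part: since the inverse transform is a linear functional with kernel of unit modulus, its contribution is bounded pointwise by $\frac{1}{2\pi}\int_{-\infty}^\infty|r(x,\omega)|\de\omega$, which by the subwavelength assumption \eqref{eq:subw_regime} is $O(\delta)$ uniformly in $x$. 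This produces exactly the $O(\delta)$ error term in the statement.

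For each mode the integrand is a product $\hat s(\omega)G_n(\omega)$, where $G_n(\omega)=-\nu_n\Re(\omega_n^+)^2/\big((\omega-\omega_n^+)(\omega-\omega_n^-)\big)$ is a rational transfer function. By the convolution theorem the inverse transform of this product is $s*h_n$, where $h_n$ is the inverse Fourier transform of $G_n$ alone; so the core of the proof is the explicit evaluation of $h_n$. I would compute $h_n(t)=\frac{1}{2\pi}\int_{-\infty}^\infty G_n(\omega)e^{-\i\omega t}\de\omega$ by contour integration. Since $G_n(\omega)=O(|\omega|^{-2})$ at infinity, a large semicircular arc contributes nothing, so the integral equals $\pm2\pi\i$ times the sum of the enclosed residues. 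The two simple poles $\omega_n^\pm$ both lie in the lower half-plane, their imaginary parts being $-\tau_n\delta+O(\delta^{3/2})<0$ by Theorem~\ref{thm:res}, so for $t<0$ I close in the upper half-plane and obtain $h_n(t)=0$, giving the causality built into \eqref{eq:hdef}; for $t\geq0$ I close in the lower half-plane and pick up both poles.

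The residue evaluation is then a short calculation. Using the symmetry relation $\omega_n^-=-(\omega_n^+)^*$, which gives the clean identity $\omega_n^+-\omega_n^-=2\Re(\omega_n^+)$, the two residue contributions combine so that the exponentials $e^{-\i\omega_n^+t}$ and $e^{-\i\omega_n^-t}$ differ only through the sign of their oscillatory factor; extracting the common decay factor $e^{\Im(\omega_n^+)t}$ leaves $e^{-\i\Re(\omega_n^+)t}-e^{\i\Re(\omega_n^+)t}=-2\i\sin(\Re(\omega_n^+)t)$. Collecting the prefactors, the $\Re(\omega_n^+)^2$ in $G_n$ cancels against the $2\Re(\omega_n^+)$ from the pole separation and the factors of $\i$ cancel, producing precisely $h_n(t)=c_n e^{\Im(\omega_n^+)t}\sin(\Re(\omega_n^+)t)$ with $c_n=\nu_n\Re(\omega_n^+)$.

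The main obstacle I anticipate is bookkeeping rather than analysis: keeping the Fourier-transform sign conventions consistent so that the poles are correctly assigned to the lower half-plane and the kernel comes out causal, and reconciling the convolution $s*h_n$ with the time-reversal implicit in the travelling-wave profile $s(x_1/v-t)$ used to define $\hat s$. A secondary technical point is justifying the interchange of the frequency integral with the finite modal sum and the convolution integral, which is a routine Fubini argument valid because $\hat s\,G_n\in L^1(\mathbb{R})$ under the standing assumptions, together with the vanishing of the arc contribution, which follows from the $O(|\omega|^{-2})$ decay of $G_n$ and the boundedness of $\hat s$.
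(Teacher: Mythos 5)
Your proposal is correct and follows essentially the same route as the paper: inverse Fourier transform of the modal decomposition \eqref{eq:gen_modal_decomp}, absorption of the remainder into the $O(\delta)$ term via \eqref{eq:subw_regime}, and evaluation of the kernel $h_n$ by contour integration, closing in the upper half-plane for $t<0$ (causality, since both poles $\omega_n^\pm$ lie in the lower half-plane) and in the lower half-plane for $t\geq0$, with the residues combining via $\omega_n^+-\omega_n^-=2\Re(\omega_n^+)$ to give \eqref{eq:hdef}. The only detail to keep straight, which you already flag, is that the vanishing of the arc contribution requires both the $O(|\omega|^{-2})$ decay of the transfer function and the boundedness of $e^{\Im(\omega)t}$ on the chosen arc, exactly as in the paper's bound \eqref{eq:bound}.
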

\begin{proof}
Applying the inverse Fourier transform to the modal expansion \eqref{eq:gen_modal_decomp} under the assumption \eqref{eq:subw_regime} yields 
	\begin{equation*} 
	p(x,t)= \sum_{n=1}^N a_n[s](t) u_n(x) + O(\delta),
	\end{equation*} 
	where, for $n=1,\dots,N$, the coefficients are given by
	\begin{equation*}
	a_n[s](t)=\frac{1}{2\pi}\int_{-\infty}^{\infty}\frac{-\hat{s}(\omega)\nu_n\Re(\omega_n^+)^2}{(\omega-\omega_n^+)(\omega-\omega_n^-)} e^{-i\omega t} \de \omega = \left( s*h_n \right)(t),
	\end{equation*}
	where $*$ denotes convolution and the kernels $h_n$ are defined for $n=1,\dots,N$ by
	\begin{equation} \label{eq:h_int_defn}
	h_n(t)=\frac{1}{2\pi}\int_{-\infty}^\infty \frac{-\nu_n\Re(\omega_n^+)^2}{(\omega-\omega_n^+)(\omega-\omega_n^-)} e^{-\i\omega t} \de \omega.
	\end{equation} 
	
	We can use complex integration to evaluate the integral in \eqref{eq:h_int_defn}. For $R>0$, let $\Con_R^\pm$ be the semicircular arc of radius $R$ in the upper $(+)$ and lower $(-)$ half-plane and let $\Con^\pm$ be the closed contour $\Con^\pm=\Con_R^\pm\cup[-R,R]$. Then, we have that
	\begin{equation*}
	h_n(t) = \frac{1}{2\pi}\oint_{\Con^\pm} \frac{-\nu_n\Re(\omega_n^+)^2}{(\omega-\omega_n^+)(\omega-\omega_n^-)} e^{-\i\omega t} \de\omega - \frac{1}{2\pi}\int_{\Con_R^\pm} \frac{-\nu_n\Re(\omega_n^+)^2}{(\omega-\omega_n^+)(\omega-\omega_n^-)} e^{-\i\omega t} \de\omega.
	\end{equation*}
	The integral around $\Con^\pm$ is easy to evaluate using the residue theorem, since it has simple poles at $\omega_n^\pm$. We will make the choice of $+$ or $-$ so that the integral along $\Con_R^\pm$ converges to zero as $R\to\infty$. For large $R$ we have a bound of the form
	\begin{equation} \label{eq:bound}
	\left|\int_{\Con_R^\pm} \frac{-\nu_n\Re(\omega_n^+)^2}{(\omega-\omega_n^+)(\omega-\omega_n^-)} e^{-\i\omega t} \de\omega\right|\leq C_n R^{-1} \sup_{\omega\in\Con_R^\pm} e^{\Im(\omega)t},
	\end{equation}
	for a positive constant $C_n$.
	
	Suppose first that $t<0$. Then we choose to integrate over $\Con_R^+$ in the upper complex plane so that \eqref{eq:bound} converges to zero as $R\to\infty$. Thus, we have for $t<0$ that 
	\begin{equation*}
	h_n(t) = \frac{1}{2\pi}\oint_{\Con^+} \frac{-\nu_n\Re(\omega_n^+)^2}{(\omega-\omega_n^+)(\omega-\omega_n^-)} e^{-\i\omega t} \de\omega =0,
	\end{equation*}
	since the integrand is holomorphic in the upper half plane. Conversely, if $t\geq0$ then we should choose to integrate over $\Con_R^-$ in order for \eqref{eq:bound} to disappear. Then, we see that for $t\geq0$ it holds that
	\begin{align*}
	h_n(t) &= \frac{1}{2\pi}\oint_{\Con^-} \frac{-\nu_n\Re(\omega_n^+)^2}{(\omega-\omega_n^+)(\omega-\omega_n^-)} e^{-\i\omega t} \de\omega \\
	&=\i\,\mathrm{Res}\left(\frac{-\nu_n\Re(\omega_n^+)^2}{(\omega-\omega_n^+)(\omega-\omega_n^-)} e^{-\i\omega t},\omega_n^+\right) + \i\,\mathrm{Res}\left(\frac{-\nu_n\Re(\omega_n^+)^2}{(\omega-\omega_n^+)(\omega-\omega_n^-)} e^{-\i\omega t},\omega_n^-\right).
	\end{align*}
	Simplifying the expressions for the residues at the two simple poles gives the result. \qed
\end{proof}

\begin{remark}
	The fact that $h_n(t)=0$ for $t<0$ ensures the causality of the modal expansion in Theorem~\ref{thm:timedom}. 
\end{remark}

Understanding the behaviour of the system of coupled subwavelength resonators as a function of time allows us to examine other properties of the cochlea-like array. For example, the asymmetry of the spatial eigenmodes $u_n(x)$ means that the decomposition from Theorem~\ref{thm:timedom} replicates the cochlea's famous travelling wave behaviour. That is, in response to an impulse the position of maximum amplitude moves slowly from left to right in the array, see \cite{davies2019fully} for details. Difficulties in reproducing this behaviour have, historically, been cited as a shortcoming of cochlear models based on local resonance \cite{bell2012resonance}.

\section{Biomimetic signal transform} \label{sec:transforms}

\begin{figure}
	\centering
	\includegraphics[width=0.9\linewidth]{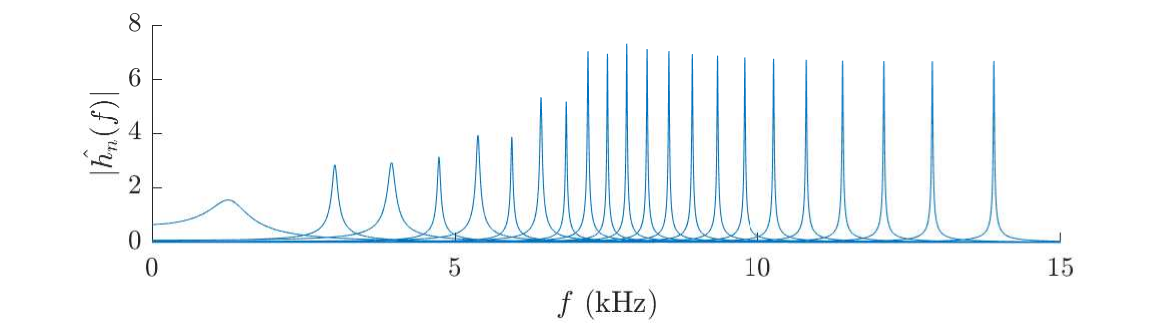}
	\caption{The band-pass filters deduced from the bio-inspired metamaterial have frequency supports that cover a range of frequencies that are audible to humans, shown here for the case of 22 resonators. The properties of these filters (centre frequency and bandwidth) are determined by the corresponding resonant frequencies $\omega_n^\pm\in\mathbb{C}$.} \label{fig:bandpass}
\end{figure}

From the asymptotic analysis in Section~\ref{sec:metamaterial}, we know how a subwavelength sound behaves when it is scattered by a high-contrast metamaterial and we used this theory to design a device that mimics the action of the cochlea. In this section, we explore using this as the basis for a biomimetic signal processing algorithm. Modelling the cochlea directly is challenging, so it is prudent to use our cochlea-like array of resonators. This mimics the cochlea's key properties and, due to its artificial nature, is much more amenable to precise mathematical analysis, as we saw in the previous section. 

From Theorem~\ref{thm:timedom}, we know that the pressure field scattered by the cochlea-like array of resonators is described by a modal decomposition whose coefficients take the form of convolutions with the functions $h_n$. We wish to explore the properties of this decomposition, given for a sound $t\mapsto s(t)$ by
\begin{equation} \label{def:subw_scat_trans}
a_n[s](t) = \left( s*h_{n}\right) (t), \quad n=1,\dots,N.
\end{equation}
Convolutional signal processing algorithms of this type have been explored in detail \cite{lyon2017human, mallat2016understanding}. Here, we will present just a few basic properties, to give some insight into the features of the algorithm that is deduced from our biomimetic approach.

\subsection{Biomimetic properties}

Since the resonant frequencies all have negative imaginary parts, each $h_n$ is a windowed oscillatory mode that acts as a band pass filter centred at $\Re(\omega_n^+)$. The frequency support of the 22 filters derived from our array of 22 resonators is shown in Fig.~\ref{fig:bandpass}. Since the imaginary part of the lowest frequency is much larger than the others (see Fig.~\ref{fig:spectrum}), $h_1$ acts somewhat as a low-pass filter.

\begin{figure}
	\centering
	\includegraphics[width=0.6\linewidth]{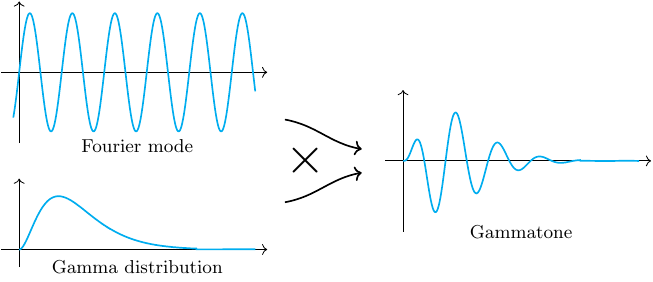}
	\caption{The cochlea-like metamaterial developed here has a response given by convolution with gammatones. A gammatone is an oscillating Fourier mode multiplied by a gamma distribution. This suggests using these gammatone functions as the basis for biomimetic signal processing algorithms.}
	\label{fig:gammatones_sketch}
\end{figure}

The basis functions $h_n$ take specific forms known as \emph{gammatones}. A gammatone is a sinusoidal mode windowed by a gamma distribution:
\begin{equation} \label{gammatone}
g(t;m,\omega,\phi)= t^{m-1} e^{\Im(\omega)t}\cos(\Re(\omega) t-\phi), \qquad t\geq0,
\end{equation}
for some order $m\in\mathbb{N}^+$ and constants $\omega\in\{z\in\mathbb{C}:\Im(z)<0\}$, $\phi\in\mathbb{R}$. We notice that $h_n(t)=c_n g(t;1,\omega_n^+,\pi/2)$. A gammatone is sketched in Fig.~\ref{fig:gammatones_sketch}.

Gammatones have been widely used to model auditory filters. Filters with gammatone kernels have been shown to approximate auditory function well, matching relatively well with physiological data and cochlear modelling \cite{patterson1988efficient, hewitt1994computer, bell2018cochlear}. They are also relatively straightforward to analyse and implement \cite{lyon2017human}, as we shall see below. The decomposition given by the metamaterial studied here, as stated in Theorem~\ref{thm:timedom}, includes only first-order gammatones. However, higher-order gammatones appear when the filters are cascaded with one another, as is a common approach for designing auditory processing algorithms \cite{lyon2017human} and convolutional networks in general \cite{mallat2016understanding}, see Appendix~\ref{app:cascade} for details.

\subsection{Stability and continuity properties} \label{sec:stab_cont}

The functions $h_n$ are bounded and continuous, meaning that if $s\in L^1(\mathbb{R})$ then $s*h_n\in L^\infty(\mathbb{R})$. If, moreover, $s$ is compactly supported then the decay properties of $h_n$ mean that $s*h_n\in L^p(\mathbb{R})$ for any $p\in[1,\infty]$. Further, we have the following lemmas which characterise the continuity and stability of $s\mapsto s*h_n$.

\begin{lemma}[Continuity of representation] \label{lem:cty}
	Consider the biomimetic scattering transform coefficients given by \eqref{def:subw_scat_trans}. There exists a positive constant $C_1$ such that for any $n\in\{1,\dots,N\}$ and any signals $s_1,s_2 \in L^1(\mathbb{R})$ it holds that
	\begin{equation*}
	\|a_n[s_1] - a_n[s_2]\|_{\infty}\leq C_1 \|s_1-s_2\|_1.
	\end{equation*}
\end{lemma}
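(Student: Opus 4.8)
The plan is to bound the difference $a_n[s_1]-a_n[s_2]$ directly by exploiting the linearity of the transform and Young's convolution inequality. Since each $a_n$ is defined by $a_n[s]=s*h_n$, linearity of convolution gives immediately that $a_n[s_1]-a_n[s_2]=(s_1-s_2)*h_n$, so the problem reduces to controlling the $L^\infty$ norm of a single convolution in terms of the $L^1$ norm of its first argument.

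The key estimate is the standard bound $\|f*g\|_\infty\leq\|f\|_1\|g\|_\infty$, which holds whenever $f\in L^1(\mathbb{R})$ and $g\in L^\infty(\mathbb{R})$. Applying this with $f=s_1-s_2$ and $g=h_n$ yields
\begin{equation*}
\|a_n[s_1]-a_n[s_2]\|_\infty = \|(s_1-s_2)*h_n\|_\infty \leq \|h_n\|_\infty\,\|s_1-s_2\|_1.
\end{equation*}
It therefore remains to verify that $\|h_n\|_\infty$ is finite, so that we may set $C_1:=\max_{1\leq n\leq N}\|h_n\|_\infty$, giving a constant uniform in $n$ as required by the statement.

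The finiteness of $\|h_n\|_\infty$ follows directly from the explicit form of $h_n$ in \eqref{eq:hdef}. For $t\geq0$ we have $h_n(t)=c_n e^{\Im(\omega_n^+)t}\sin(\Re(\omega_n^+)t)$, and since each resonant frequency satisfies $\Im(\omega_n^+)<0$ (the resonant frequencies lie in the lower half-plane, as established in Theorem~\ref{thm:res}), the exponential factor decays and the sine factor is bounded by one; for $t<0$ we have $h_n(t)=0$. Hence $|h_n(t)|\leq|c_n|$ for all $t$, so $\|h_n\|_\infty\leq|c_n|<\infty$. Since there are only finitely many filters, taking the maximum over $n$ produces the desired uniform constant $C_1$.

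There is no serious obstacle here: the result is essentially an immediate consequence of Young's inequality together with the boundedness of the gammatone kernels. The only point requiring any care is ensuring the constant $C_1$ can be chosen independently of $n$, which is handled trivially since $N$ is finite and each $\|h_n\|_\infty$ is finite.
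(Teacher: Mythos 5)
Your proof is correct and follows essentially the same route as the paper: the paper's argument is exactly the $L^1$--$L^\infty$ convolution bound (Young's inequality written out as an explicit integral estimate), with the constant taken as the supremum of $|h_n|$ over the finitely many kernels. If anything, your version is slightly cleaner, since you justify $\|h_n\|_\infty\leq |c_n|<\infty$ explicitly from the decay of the gammatone, whereas the paper's definition of $C_1$ carries a stray factor $(1-c)$ that plays no role here and appears to be copied from the proof of Lemma~\ref{lem:stab}.
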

\begin{proof}
	It holds that
	\begin{equation*}
	C_1:= \sup_{n\in\{1,\dots,N\}} \sup_{x\in\mathbb{R}}(1-c)\left|h_n(x)\right|<\infty.
	\end{equation*}
	Then, the result follows from the fact that
	\begin{equation*}
	\left|a_n[s_1](t) - a_n[s_2](t)\right|\leq \int_{-\infty}^\infty |s_1(u)-s_2(u)|\left|h_n(t-u)\right|\de u,
	\end{equation*}
	for any $t\in\mathbb{R}$. \qed
\end{proof}

The continuity property stated in Lemma~\ref{lem:cty} implies, in particular, that the representation of a signal is stable with respect to additive noise. An additional useful property is for a representation to be stable with respect to time dilations, \emph{i.e.} with respect to composition with the operator $T_\tau f (t) = f(t+\tau(t))$, where $\tau$ is some appropriate function of time.

\begin{lemma}[Pointwise stability to time warping] \label{lem:pt_stab}
	Consider the biomimetic scattering transform coefficients given by \eqref{def:subw_scat_trans}. For $\tau\in C^0(\mathbb{R};\mathbb{R})$, let $T_\tau$ be the associated time warping operator, given by $T_\tau f (t) = f(t+\tau(t))$.  Then, there exists a positive constant $C_2$ such that for any $n\in\{1,\dots,N\}$ and any signal $s \in L^1(\mathbb{R})$ it holds that
	\begin{equation*}
	\left\| a_n[s] - a_n[T_\tau s]\right\|_\infty \leq C_2 \|s\|_1 \|\tau\|_\infty.
	\end{equation*}
\end{lemma}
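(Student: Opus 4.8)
The plan is to exploit that the difference $a_n[s]-a_n[T_\tau s]$ is \emph{linear} in $s$, so that once the time-warp displacement has been transferred from the argument of $s$ onto the smooth kernel $h_n$, the $s$-dependence can be estimated in $L^1$ while everything else is controlled in sup-norm. Writing the coefficients as convolution integrals, $a_n[s](t)=\int_{-\infty}^\infty s(u)\,h_n(t-u)\de u$ and $a_n[T_\tau s](t)=\int_{-\infty}^\infty s(u+\tau(u))\,h_n(t-u)\de u$, the first step is to re-index the warped integral by the substitution $u\mapsto u+\tau(u)$, so that the warp appears inside the argument of $h_n$ rather than inside $s$. This recasts the difference as an expression of the form $\int_{-\infty}^\infty s(u)\big[h_n(t-u)-h_n(t-u-\tau(u))\big]\de u$, in which $s$ appears undisturbed and the entire effect of $\tau$ sits in a small displacement of the kernel's argument.

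Next I would estimate the bracketed kernel difference pointwise. Each $h_n$ from \eqref{eq:hdef} is Lipschitz on $\mathbb{R}$: for $t\geq 0$ its derivative is an exponentially decaying combination of $\sin(\Re(\omega_n^+)t)$ and $\cos(\Re(\omega_n^+)t)$ (since $\Im(\omega_n^+)<0$), it vanishes for $t<0$, and its one-sided derivatives at $t=0$ are finite, so $h_n$ has a finite Lipschitz constant $L_n:=\sup_{t\in\mathbb{R}}|h_n'(t)|$. The mean value theorem then gives $|h_n(t-u)-h_n(t-u-\tau(u))|\leq L_n|\tau(u)|\leq L_n\|\tau\|_\infty$. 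Inserting this bound and passing the supremum in $t$ through the integral yields $\|a_n[s]-a_n[T_\tau s]\|_\infty\leq L_n\|\tau\|_\infty\int_{-\infty}^\infty|s(u)|\de u = L_n\|\tau\|_\infty\|s\|_1$. Taking $C_2:=\sup_{1\leq n\leq N}L_n$, which is finite because there are finitely many resonators and each $L_n<\infty$, delivers the claimed estimate uniformly in $n$.

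The hard part is the re-indexing step, which is where all the subtlety lies. For a merely continuous $\tau$ the map $u\mapsto u+\tau(u)$ need be neither injective nor differentiable, and a genuine change of variables introduces the Jacobian $1+\tau'(u)$; transferring the displacement onto $h_n$ exactly is therefore not available at this level of regularity. Equivalently, if one does not substitute, the difference retains the increment $s(u)-s(u+\tau(u))$, which cannot be controlled by $\|s\|_1\|\tau\|_\infty$ for rough $s$ without a modulus of continuity for $s$. I expect this to be justified at leading order in the displacement — consistent with the pointwise, asymptotic character of the result — by treating the Jacobian as $1+O(\tau')$ and retaining only the first-order effect of $\tau$ on the kernel; a fully rigorous version would impose $\tau\in C^1$ with $\|\tau'\|_\infty<1$, after which the substitution is legitimate and the Lipschitz argument above applies verbatim. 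Everything else (the explicit form of $h_n$, its boundedness and exponential decay, and the finiteness of $C_2$) is then routine.
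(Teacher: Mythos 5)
Your proposal follows the same route as the paper's own proof of Lemma~\ref{lem:pt_stab}: transfer the warp displacement from the signal onto the kernel, bound the kernel increment by the mean value theorem, and integrate against $|s|$; the paper's constant is exactly your $\sup_n L_n$, namely $C_2=\sup_{n}\sup_{x\in(0,\infty)}|h_n'(x)|$. The step you single out as ``the hard part'' is precisely the step the paper performs silently: its proof opens with the estimate
\begin{equation*}
\left|a_n[s](t) - a_n[T_\tau s](t)\right| \leq \int_{-\infty}^\infty |s(t-u)|\left|h_n(u)-h_n(u-\tau(u))\right|\de u ,
\end{equation*}
which is the re-indexed form --- warp on the kernel, no Jacobian --- asserted without justification. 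So relative to the paper you have missed nothing; you have instead correctly located a gap in the paper's own argument.

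Moreover, your suspicion that this step cannot be repaired at the $C^0$ level is right, and it can be sharpened to a counterexample showing the statement itself fails for merely continuous $\tau$. Take $s_\epsilon=\epsilon^{-1}\chi_{[0,\epsilon]}$, and let $\tau$ be continuous with $\tau(u)=-u$ on $[0,T]$, $\tau(u)=u-2T$ on $[T,2T]$ and $\tau=0$ elsewhere, where $0<T<\pi/\Re(\omega_n^+)$, so that $\|\tau\|_\infty=T$. The warp collapses all of $[0,T]$ onto the point $0$, so $T_\tau s_\epsilon\equiv\epsilon^{-1}$ on $[0,T]$ (the warped signal's $L^1$ mass is inflated to about $T/\epsilon$), and evaluating at $t=T$ gives
\begin{equation*}
a_n[T_\tau s_\epsilon](T)=\frac{1}{\epsilon}\int_0^T h_n(v)\de v \longrightarrow \infty \quad\text{as } \epsilon\to0,
\end{equation*}
since the integral is nonzero (the integrand has constant sign on $[0,T]$), while $|a_n[s_\epsilon](T)|\leq\|h_n\|_\infty$ and the claimed bound $C_2\|s_\epsilon\|_1\|\tau\|_\infty=C_2T$ stays fixed. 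Hence control of $\tau'$ is unavoidable, exactly as you anticipated when you proposed strengthening the hypothesis to $\tau\in C^1$ with $\|\tau'\|_\infty<1$.

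One correction to your closing remark, though: under that strengthened hypothesis the substitution $z=u+\tau(u)$ is legitimate, but the Lipschitz argument does not then apply ``verbatim,'' because the Jacobian factor $\left(1+\tau'\right)^{-1}$ survives and contributes an error of size $\|h_n\|_\infty\|\tau'\|_\infty/(1-\|\tau'\|_\infty)$. The bound one can honestly prove is therefore of the form $C\|s\|_1\left(\|\tau\|_\infty+\|\tau'\|_\infty\right)$, with an unavoidable $\|\tau'\|_\infty$ term. This is exactly the structure of the paper's Lemma~\ref{lem:stab} for temporal averages, whose proof is the one that carries out the change of variables with the Jacobian correctly; Lemma~\ref{lem:pt_stab} as stated should be read as holding only under those stronger assumptions.
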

\begin{proof}
	Let $h_n'$ denote the first derivative of $h_n$ on $(0,\infty)$. Then, we see that
	\begin{equation*}
	C_2:=\sup_{n\in\{1,\dots,N\}} \sup_{x\in(0,\infty)}\left|h_n'(x)\right|<\infty,
	\end{equation*}
	and, by the mean value theorem, that for $t\in\mathbb{R}$
	\begin{equation*}
	\left|h_n (t-\tau(t)) - h_n(t)\right|\leq C_2 |\tau(t)|.
	\end{equation*}
	Thus, we see that for any $t\in\mathbb{R}$
	\begin{align*}
	|a_n[s] - a_n[T_\tau s]| &\leq \int_{-\infty}^\infty |s(t-u)|\left|h_n(u)-h_n(u-\tau(u))\right|\de u,\\
	&\leq C_2 \|\tau(u)\|_\infty \int_{-\infty}^\infty |s(t-u)|\de u.
	\end{align*} \qed
\end{proof}

We can improve on the notion of stability from Lemma~\ref{lem:pt_stab} by taking temporal averages of the coefficients. A particular advantage of such an approach is that it gives outputs that are invariant to translation (\emph{cf.} the motivation behind the design of the scattering transform \cite{bruna2013invariant, mallat2012group}). Let $\langle a_n[s] \rangle_{(t_1,t_2)}$ denote the average of $a_n[s](t)$ over the interval $(t_1,t_2)$, given by
\begin{equation} \label{eq:temporal_average}
\langle a_n[s] \rangle_{(t_1,t_2)}=\frac{1}{t_2-t_1}\int_{t_1}^{t_2} a_n[s](t) \de t.
\end{equation}
Lemma~\ref{lem:stab} shows that temporal averages are approximately invariant to translations if the length of the window is large relative to the size of the translation (\emph{i.e.} if $t_2-t_1\gg \|\tau\|_\infty$).

\begin{lemma}[Stability of averages to time warping] \label{lem:stab}
	Consider the biomimetic scattering transform coefficients given by \eqref{def:subw_scat_trans}. For $\tau\in C^1(\mathbb{R};\mathbb{R})$, let $T_\tau$ be the associated time warping operator, given by $T_\tau f (t) = f(t+\tau(t))$. Suppose that $\tau$ is such that $\|\tau'\|_\infty<\frac{1}{2}$. Then, there exists a positive constant $C_3$ such that for any $n\in\{1,\dots,N\}$, any time interval $(t_1,t_2)\subset\mathbb{R}$ and any signal $s \in L^1(\mathbb{R})$ it holds that
	\begin{equation*}
	\left|\langle a_n[s] \rangle_{(t_1,t_2)} - \langle a_n[T_\tau s] \rangle_{(t_1,t_2)} \right| \leq C_3 \|s\|_1 \left( \frac{2}{t_2-t_1}\|\tau\|_\infty + \|\tau'\|_\infty \right).
	\end{equation*}
\end{lemma}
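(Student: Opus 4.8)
The plan is to exploit the averaging by rewriting each temporal average as a pairing of the (merely $L^1$) signal against a smooth bounded test function, and then to move the time warp off the signal and onto that test function by a change of variables. First I would substitute $a_n[s](t)=(s*h_n)(t)$ into \eqref{eq:temporal_average} and apply Fubini (justified since $s\in L^1(\mathbb{R})$, each $h_n$ is bounded by \eqref{eq:hdef} because $\Im(\omega_n^+)<0$, and the $t$-integral runs over a bounded interval) to obtain
\[
\langle a_n[s]\rangle_{(t_1,t_2)}=\frac{1}{t_2-t_1}\int_{-\infty}^\infty s(u)\,H_n(u)\de u,\qquad H_n(u):=\int_{t_1-u}^{t_2-u}h_n(v)\de v.
\]
The identical manipulation applied to $T_\tau s$ gives $\langle a_n[T_\tau s]\rangle_{(t_1,t_2)}=\frac{1}{t_2-t_1}\int s(u+\tau(u))H_n(u)\de u$. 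This reduces everything to comparing the single smooth window $H_n$ with a warped version of itself; the point of the averaging is exactly that it replaces $h_n$ by the gentler object $H_n$.

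Next I would substitute $w=u+\tau(u)$ in the warped integral. Because $\|\tau'\|_\infty<\tfrac12$ we have $1+\tau'(u)\geq\tfrac12>0$, so $u\mapsto u+\tau(u)$ is a strictly increasing $C^1$ bijection with $C^1$ inverse $\phi$; this is precisely where the hypothesis on $\tau'$ is needed. The change of variables yields $\langle a_n[T_\tau s]\rangle_{(t_1,t_2)}=\frac{1}{t_2-t_1}\int s(w)\,H_n(\phi(w))/(1+\tau'(\phi(w)))\de w$, so the difference of the two averages is $\frac{1}{t_2-t_1}\int s(w)\,g_n(w)\de w$ with $g_n(w)=H_n(w)-H_n(\phi(w))/(1+\tau'(\phi(w)))$. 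An $L^1$--$L^\infty$ (H\"older) estimate then bounds $|\langle a_n[s]\rangle-\langle a_n[T_\tau s]\rangle|$ by $\frac{\|s\|_1}{t_2-t_1}\sup_w|g_n(w)|$.

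To control $\sup_w|g_n(w)|$ I would split $g_n(w)=[H_n(w)-H_n(\phi(w))]+H_n(\phi(w))\big[1-(1+\tau'(\phi(w)))^{-1}\big]$. For the first (translation) piece, note $w-\phi(w)=\tau(\phi(w))$, so $|w-\phi(w)|\leq\|\tau\|_\infty$, while $H_n$ is Lipschitz with constant $2\|h_n\|_\infty$ (the intervals defining $H_n(w)$ and $H_n(w')$ differ by a set of measure at most $2|w-w'|$ on which $|h_n|\leq\|h_n\|_\infty$); this piece is thus at most $2\|h_n\|_\infty\|\tau\|_\infty$. For the second (Jacobian) piece, $\big|1-(1+\tau'(\phi(w)))^{-1}\big|=|\tau'(\phi(w))|/|1+\tau'(\phi(w))|\leq 2\|\tau'\|_\infty$, again using $\|\tau'\|_\infty<\tfrac12$.

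The delicate point, which I expect to be the main obstacle, is the factor $H_n(\phi(w))$ in the Jacobian piece: after dividing by $t_2-t_1$ its contribution to the $\|\tau'\|_\infty$ term must not blow up as the averaging window shrinks. The naive bound $|H_n|\leq\|h_n\|_1$ would leave a stray $(t_2-t_1)^{-1}$ in front of $\|\tau'\|_\infty$, which the claimed inequality forbids. The resolution is to use instead $\|H_n\|_\infty\leq(t_2-t_1)\|h_n\|_\infty$, valid because $H_n(u)$ integrates $h_n$ over an interval of length exactly $t_2-t_1$; this cancels the $(t_2-t_1)^{-1}$ and leaves the clean $\|\tau'\|_\infty$ term. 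Assembling the two pieces gives
\[
\left|\langle a_n[s]\rangle_{(t_1,t_2)}-\langle a_n[T_\tau s]\rangle_{(t_1,t_2)}\right|\leq 2\|h_n\|_\infty\|s\|_1\left(\frac{\|\tau\|_\infty}{t_2-t_1}+\|\tau'\|_\infty\right),
\]
and taking $C_3=2\sup_n\|h_n\|_\infty$ (finite, since each gammatone is bounded) yields the stated bound, the extra factor of two in the $\|\tau\|_\infty$ term being harmlessly absorbed.
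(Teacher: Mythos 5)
Your proof is correct, and it shares the paper's core mechanism: a change of variables by the inverse of the warp map (using $\|\tau'\|_\infty<\tfrac12$ both for invertibility and to bound the Jacobian factor by $2$), followed by a split into a translation error controlled by $\|\tau\|_\infty$ and a Jacobian error controlled by $\|\tau'\|_\infty$, with the window length $t_2-t_1$ cancelling the $1/(t_2-t_1)$ prefactor in the latter. The execution is dual, however. The paper substitutes $\varphi(t)=t-\tau(t)$ in the \emph{output-time} integral over $(t_1,t_2)$, so the translation error appears as two endpoint intervals $I_1,I_2$ of length at most $\|\tau\|_\infty$, and the Jacobian term is integrated over the window; you instead collapse the window integral into $H_n(u)=\int_{t_1-u}^{t_2-u}h_n(v)\,\mathrm{d}v$ and substitute $w=u+\tau(u)$ in the \emph{signal-time} integral, handling the translation error via the Lipschitz bound $|H_n(w)-H_n(w')|\le 2\|h_n\|_\infty|w-w'|$ and the Jacobian error via $\|H_n\|_\infty\le(t_2-t_1)\|h_n\|_\infty$ --- the same two estimates in different packaging, landing on the same constant $C_3=2\sup_n\|h_n\|_\infty$ (indeed with a slightly sharper coefficient on the $\|\tau\|_\infty$ term, as you note). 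One genuine advantage of your route: you work with $a_n[T_\tau s]$ exactly as the statement defines it, with the warp acting on the input signal, $s(u+\tau(u))$, and you justify the needed integrability ($T_\tau s\in L^1$ because the Jacobian of the warp is bounded below by $\tfrac12$). The paper's computation instead manipulates $h_n(t-u-\tau(t))$, which amounts to comparing $\langle a_n[s]\rangle$ with the average of the warped \emph{output} $a_n[s](t-\tau(t))$; since convolution does not commute with composition, that is not literally the quantity in the statement, so your version is the more faithful of the two.
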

\begin{proof}
	Since $\|\tau'\|_\infty\leq c<1$, $\varphi(t)=t-\tau(t)$ is invertible and $\|\varphi'\|_\infty\geq1-c$, it holds that
	\begin{align*}
	\int_{t_1}^{t_2}\left(h_n(t-\tau(t))-h_n(t)\right) \de t
	&=\int_{\varphi(t_1)}^{\varphi(t_2)}h_n(t)\frac{1}{\varphi'(\varphi^{-1}(t))}\de t-\int_{t_1}^{t_2}h_n(t) \de t \\
	& = \int_{I_1-I_2}h_n(t)\frac{1}{\varphi'(\varphi^{-1}(t))}\de t + \int_{t_1}^{t_2}h_n(t)\frac{\tau'(\varphi^{-1}(t))}{\varphi'(\varphi^{-1}(t))} \de t,
	\end{align*}
	for some intervals $I_1,I_2\subset\mathbb{R}$, each of which has length bounded by $\|\tau\|_\infty$. Now, define the constant
	\begin{equation*}
	C_3:=\sup_{n\in\{1,\dots,N\}} \sup_{x\in(0,\infty)}(1-c)^{-1}\left|h_n(x)\right|<\infty.
	\end{equation*}
	Finally, we can compute that
	\begin{align*}
	&\langle a_n[s] \rangle_{(t_1,t_2)} - \langle a_n[T_\tau s] \rangle_{(t_1,t_2)} \\
	&\hspace{4em}= \frac{1}{t_2-t_1} \int_{-\infty}^\infty s(u) \int_{t_1}^{t_2}\left(h_n(t-u-\tau(t))-h_n(t-u)\right) \de t \de u \\
	&\hspace{4em}= \frac{1}{t_2-t_1} \int_{-\infty}^\infty s(u) \bigg( \int_{I_1-I_2}h_n(t-u)\frac{1}{\varphi'(\varphi^{-1}(t-u))}\de t \\ 
	& \hspace{16em}+ \int_{t_1}^{t_2}h_n(t-u)\frac{\tau'(\varphi^{-1}(t-u))}{\varphi'(\varphi^{-1}(t-u))} \de t \bigg) \de u,
	\end{align*}
	meaning that
	\begin{align*}
	\left|\langle a_n[s] \rangle_{(t_1,t_2)} - \langle a_n[T_\tau s] \rangle_{(t_1,t_2)} \right| &\leq \frac{1}{t_2-t_1} \|s\|_1 \Big[2\|\tau\|_\infty C_3+(t_2-t_1)C_3\|\tau'\|_\infty\Big]. 
	\end{align*} \qed
\end{proof}

Taken together, Lemmas~\ref{lem:cty}~to~\ref{lem:stab} show that the biomimetic scattering transform proposed here, in \eqref{def:subw_scat_trans}, which was derived from the behaviour of the cochlea-inspired metamaterial studied in Section~\ref{sec:metamaterial}, has favourable stability and continuity properties. This is in addition to the fact that gammatone kernels match direct observations of the human auditory system well, suggesting that this structure provides an excellent starting point for a biomimetic signal processing architecture.

\section{Representation of natural sounds} \label{sec:natsounds}

In Section~\ref{sec:metamaterial} and Section~\ref{sec:transforms} we used asymptotic methods to demonstrate how properties of signal filtering acoustic metamaterials can be deduced from biological auditory systems. These links are labelled (a) and (b) in Fig.~\ref{fig:workflow}. In this final section, we will present an example of how our asymptotic framework can be used to facilitate link (c), whereby additional properties of signal processing algorithms are deduced from the target biological systems. In particular, we will devise an approach to replicate the human auditory system's ability to recognise long-range statistical properties of sounds. This is intended as a creative example of the innovation that is possible, given the framework that has been developed here.

The human auditory system does not just extract information from a signal locally in time, it is also able to recognise global properties of a sound and can appreciate notions of timbre and quality. We would like to design an approach that can account for this, by adding an additional processing step to the biomimetic signal transform that was derived above. The approach set out in this section is tailored to the class of \emph{natural sounds}. This is, a class of natural and behaviourally significant sounds to which humans are known to be adapted. These sounds have observed statistical properties that we are able to exploit (and which we interpret as their defining characteristic). The class of natural sounds is very broad and includes animal sounds, speed and most music (this is by design, since music satisfying these properties is generally more pleasing to listen to \cite{voss1978music}).

\subsection{Properties of natural sounds} \label{sec:nat_properties}

Let us briefly summarise what has been observed about the low-order statistics of natural sounds \cite{attias1997temporal, attias1998coding, theunissen2014neural, voss19751}. For a sound $s(t)$, let $a_\omega(t)$ be the component at frequency $\omega$ (obtained \emph{e.g.} through the application of a band-pass filter centred at $\omega$). Then we can write that
\begin{equation}
a_\omega(t) = A_\omega(t) \cos(\omega t + \phi_\omega(t)),
\end{equation}
where $A_\omega(t)\geq0$ and $\phi_\omega(t)$ are the instantaneous amplitude and phase, respectively. We view $A_\omega(t)$ and $\phi_\omega(t)$ as stochastic processes and wish to understand their statistics. 

The most famous characteristic of natural sounds is that several properties of their frequency components vary according to the inverse of the frequency. In particular, it is well known that the power spectrum (the square of the Fourier transform) of the amplitude satisfies a relationship of the form
\begin{equation} \label{eq:SA}
S_{A_\omega}(f)=|\hat{A}_\omega(f)|^2\propto \frac{1}{f^\gamma}, \qquad 0<f<f_{max},
\end{equation}
for a positive parameter $\gamma$ (which often lies in a neighbourhood of 1) and some maximum frequency $f_{max}$. Further, this property is independent of $\omega$, \emph{i.e.} of the frequency band that is studied \cite{attias1997temporal}.

Consider the log-amplitude, $\log_{10} A_\omega(t)$. It has been observed that for a variety of natural sounds (including speech, animal vocalisations, music and environmental sounds) the log-amplitude is locally stationary, in the sense that it satisfies a statistical distribution that does not depend on the time $t$. Suppose we normalise the log-amplitude so that it has zero mean and unit variance, giving a quantity that is invariant to amplitude scaling. Then, the normalised log-amplitude averaged over some time interval $[t_1,t_2]$ satisfies a distribution of the form \cite{attias1998coding}
\begin{equation} \label{eq:pA}
p_A(x) = \beta \exp(\beta x - \alpha -e^{\beta x-\alpha}),
\end{equation}
where $\alpha$ and $\beta$ are real-valued parameters and $\beta>0$.
Further, this property does not depend on the frequency band and is scale invariant in the sense that it is independent of the time interval over which the average is taken.

The properties of the instantaneous phase $\phi_\omega(t)$ have also been studied. Similar to the instantaneous amplitude, the power spectrum $S_{\phi_\omega}$ of the instantaneous phase satisfies a $1/f$-type relationship, of the same form as \eqref{eq:SA}. On the other hand, the instantaneous phase is non-stationary (even locally), making it difficult to describe through the above methods. A more tractable quantity is the instantaneous frequency, defined as
\begin{equation*}
\lambda_\omega = \dd{\phi_\omega}{t}.
\end{equation*}
It has been observed that $\lambda_\omega(t)$ is locally stationary for natural sounds and the temporal mean of its modulus satisfies a distribution $p_\lambda$ of the form \cite{attias1997temporal}
\begin{equation} \label{eq:pphi}
p_\lambda(x) \propto (\zeta^2+x^2)^{-\eta/2},
\end{equation}
for positive parameters $\zeta$ and $\eta>1$. Examples of these distributions are shown in Fig.~\ref{fig:trumpet} for a short recording of a trumpet playing a single note.

\begin{figure}
	\begin{center}
		\begin{subfigure}[b]{0.45\linewidth}
			\includegraphics[width=\linewidth]{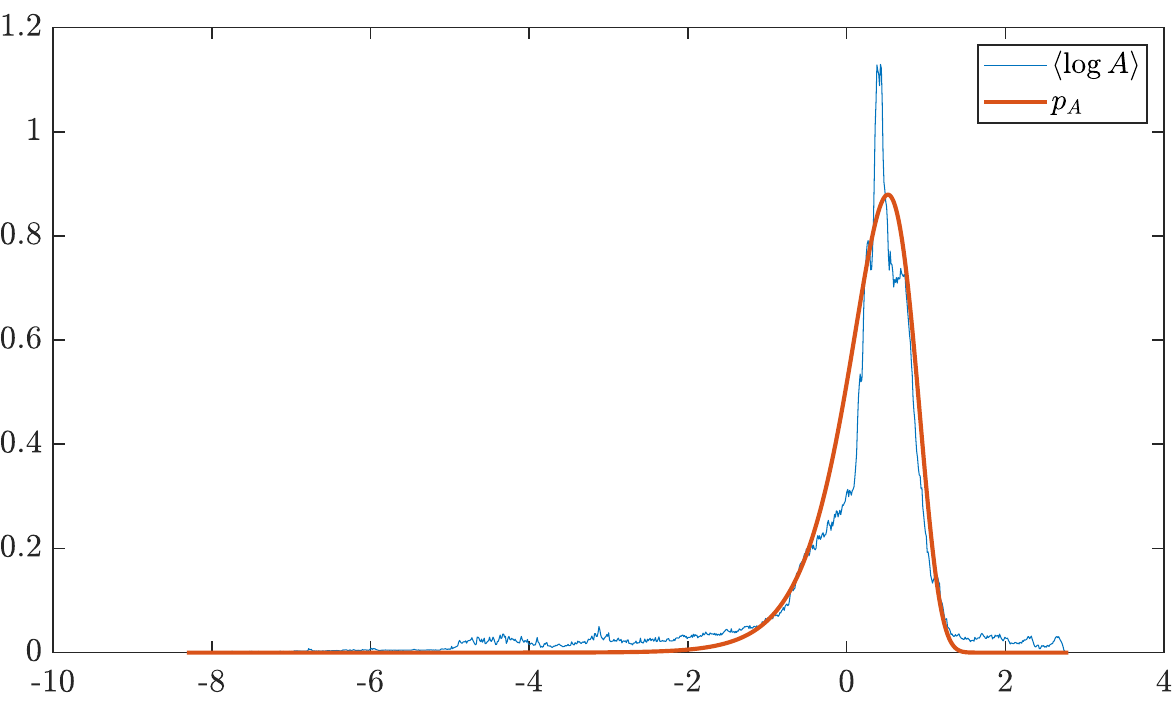}
			\caption{A histogram of the normalised time-averaged log-amplitude, fitted to a distribution of the form \eqref{eq:pA}.}
		\end{subfigure}
		\hspace{0.6cm}
		\begin{subfigure}[b]{0.45\linewidth}
			\includegraphics[width=\linewidth]{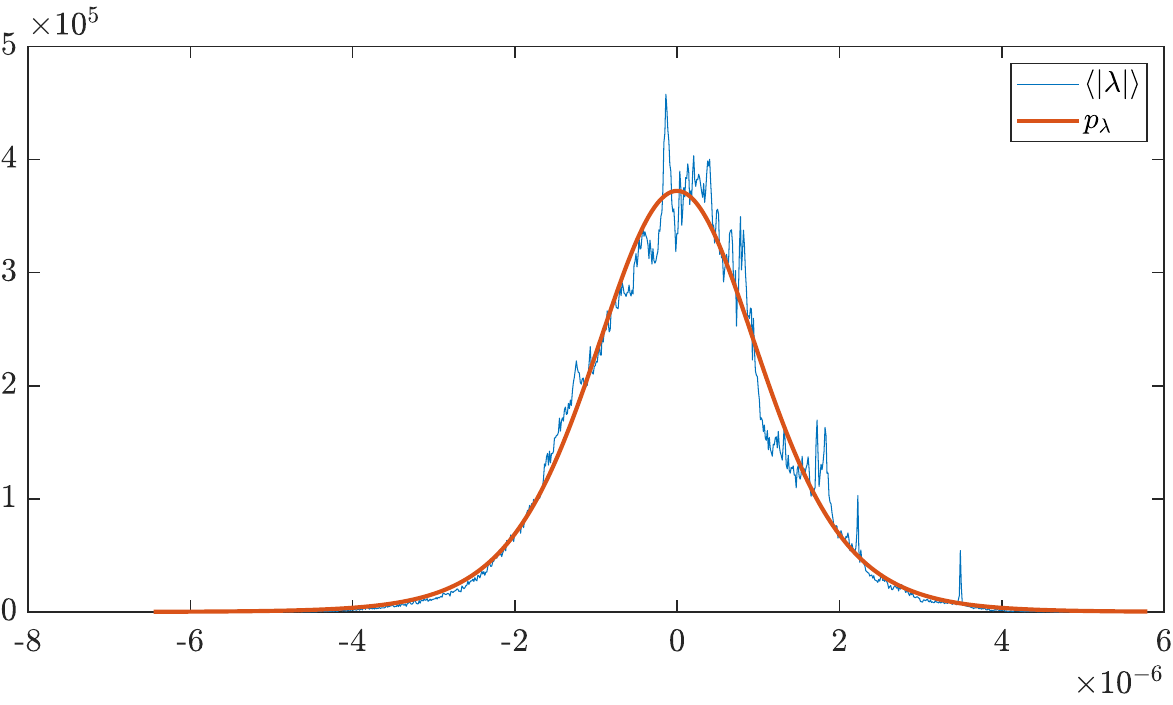}
			\caption{A histogram of the normalised time-averaged instantaneous frequency, fitted to a distribution of the form \eqref{eq:pphi}.}
		\end{subfigure}
		\begin{subfigure}[b]{0.45\linewidth}
			\includegraphics[width=\linewidth]{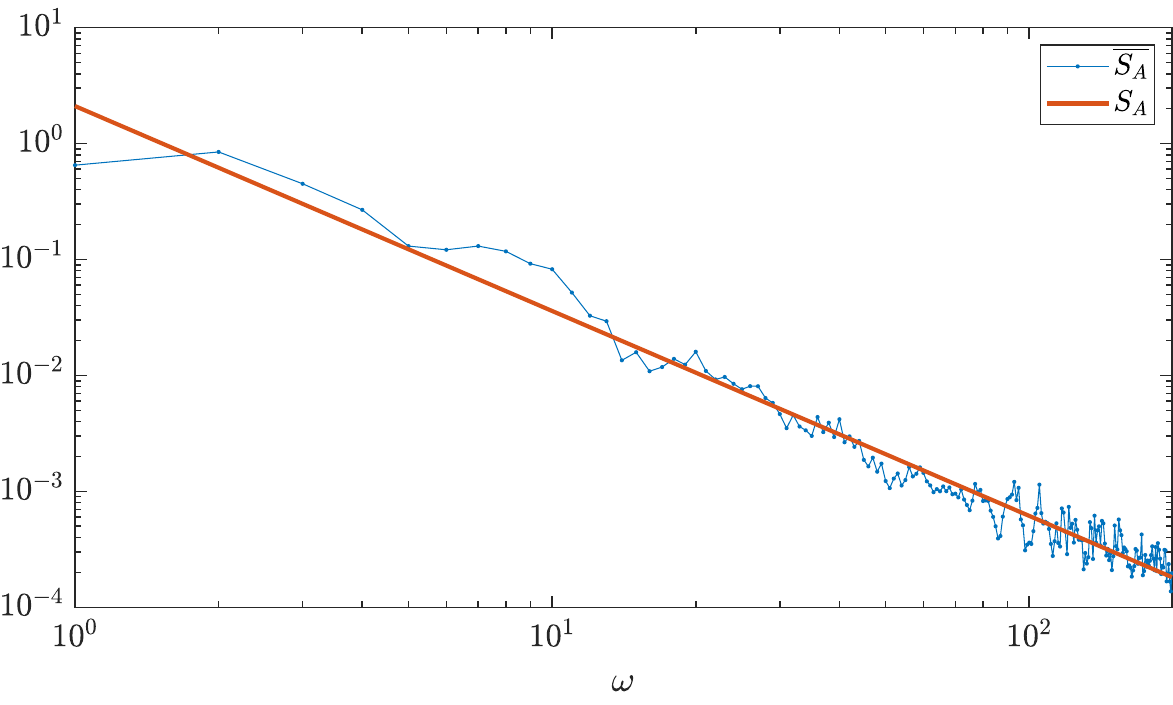}
			\caption{The averaged (over different frequency bands) power spectra of the instantaneous amplitude, fitted to a distribution of the form \eqref{eq:SA}.}
		\end{subfigure}
		\hspace{0.6cm}
		\begin{subfigure}[b]{0.45\linewidth}
			\includegraphics[width=\linewidth]{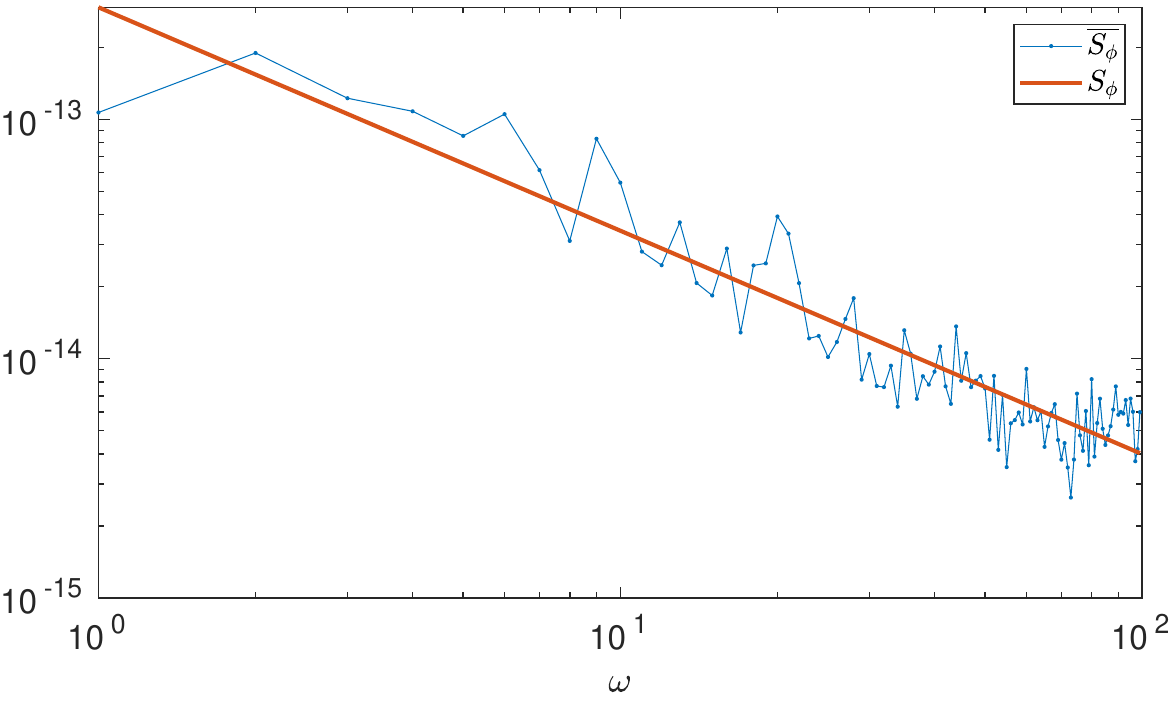}
			\caption{The averaged (over different frequency bands) power spectra of the instantaneous phase, fitted to a distribution of the form \eqref{eq:SA}.}
		\end{subfigure}
		\caption{Natural sounds have been observed to satisfy given low-order statistical properties. The output of scattering by a cochlea-like metamaterial will satisfy known distributions, as a result. Here, the data extracted by the cochlea-like device from a recording of a trumpet playing a single note are shown along with the fitted distributions.} \label{fig:trumpet}
	\end{center}
\end{figure}

\subsection{Representation algorithm} \label{sec:algorithm}

For a given natural sound, we wish to find the parameters that characterise its global properties, according to \eqref{eq:SA}--\eqref{eq:pphi}. Given a signal $s$ we first compute the convolution with the band-pass filter $h_n$ to yield the spectral component at the frequency $\Re(\omega_n^+)$, given by
\begin{equation*}
a_n[s](t)=A_n(t)\cos(\Re(\omega_n^+)t+\phi_n(t)).
\end{equation*}
We extract the functions $A_n$ and $\phi_n$ from $a_n[s]$ using the Hilbert transform \cite{attias1997temporal,flanagan1980parametric,boashash1992estimating}. In particular, we have that
\begin{equation*}
a_n[s](t)+\i H(a_n[s])(t)=a_n[s](t)+\frac{\i}{\pi}\int_{-\infty}^\infty \frac{a_n[s](u)}{t-u}\de u = A_n(t) e^{\i(\Re(\omega_n^+)t+\phi_n(t))},
\end{equation*}
from which we can extract $A_n$ and $\phi_n$ by taking the complex modulus and argument, respectively. 

\begin{remark}
	It is not obvious that the Hilbert transform $H(a_n[s])$ is well-defined. Indeed, we must formally take the principal value of the integral. For a signal that is integrable and has compact support, $H(a_n[s])(t)$ exists for almost all $t\in\mathbb{R}$.
\end{remark}

\begin{figure}
	\centering
	\includegraphics[width=0.6\linewidth]{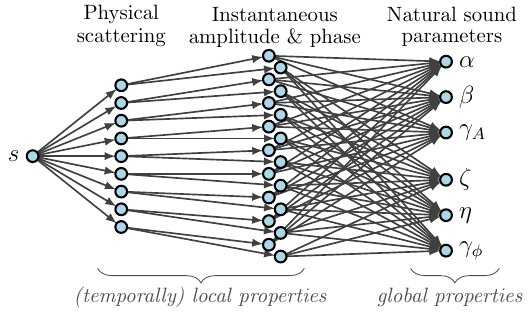}
	\caption{The architecture considered in this work first applies filters derived from physical scattering by a cochlea-like device, then extracts the instantaneous amplitude and phase before, finally, estimating the parameters of the associated natural sound distributions.}
	\label{fig:architecture}
\end{figure}

Given the functions $A_n$ and $\phi_n$, the power spectra $S_{A_n}(f)$ and $S_{\phi_n}(f)$ can be computed by applying the Fourier transform and squaring. We estimate the relationships of the form \eqref{eq:SA} by first averaging the $N$ power spectra, to give $\overline{S}_A(f):=\frac{1}{N}\sum_n {S}_{A_n}(f)$ and $\overline{S}_\phi(f):=\frac{1}{N}\sum_n {S}_{\phi_n}(f)$ before fitting curves $f^{-\gamma_A}$ and $f^{-\gamma_\phi}$ using least-squares regression. We estimate the parameters of the probability distributions \eqref{eq:pA} and \eqref{eq:pphi} by normalising both $\log_{10}A_n(t)$ and $\lambda_n(t)$ so that
\begin{equation*}
\langle\log_{10}A_n\rangle=0, \qquad  \langle(\log_{10}A_n)^2\rangle=1,
\end{equation*}
and similarly for $\lambda_n(t)$, before repeatedly averaging the normalised functions over intervals $[t_1,t_2]\subset\mathbb{R}$. Curves of the form \eqref{eq:pA} and \eqref{eq:pphi} are then fitted to the resulting histograms (which combine the temporal averages from different filters $n=1,\dots,N$ and different time intervals $[t_1,t_2]$) using non-linear least-squares optimisation. A schematic of this parameter extraction architecture is given in Fig.~\ref{fig:architecture}. An example of the four datasets and their fitted distributions are shown in Fig.~\ref{fig:trumpet} for a recording of a trumpet. Table~\ref{table} shows some other examples of these parameters for various natural sounds.

\begin{table}
	\centering
	\begin{tabular}{ c c c c c c c c c}
		& \rotatebox{90}{trumpet} & \rotatebox{90}{violin} & \rotatebox{90}{cello} & \rotatebox{90}{thunder} & \rotatebox{90}{baby speech} & \rotatebox{90}{adult speech} & \rotatebox{90}{running water} & \rotatebox{90}{crow call} \\ \hline
		$\gamma_A$ & 1.767 & 1.563 & 1.528 & 1.415 & 1.763 & 1.808 & 1.466 & 1.571\\  
		$\alpha$ & 1.244 & 0.375 & 0.284 & 0.474 & 0.517 & 0.528 & 0.336 & 0.649\\  
		$\beta$ & 2.390 & 0.783 & 0.841 & 0.596 & 0.747 & 0.894 & 0.484 & 0.896\\
		$\gamma_\phi$ & 0.763 & 0.871 & 0.6977 & 0.446 & 1.192 & 1.125 & 1.088 & 0.908\\ 
		$\zeta$ {$(\times10^{-6})$} & 2.878 & 3.433 & 6.1149 & 6.322 & 4.773 & 5.176 & 5.200 & 4.212  \\  
		$\eta$ & 8.579 & 11.824 & 8.679 & 8.315 & 9.660 & 9.358 & 9.290 & 10.475\\ 
		\hline
	\end{tabular}
	\caption{Values of the estimated distribution parameters for different samples of natural sounds. $\gamma_A$ and $\gamma_\phi$ capture the $f^{-\gamma}$ relationships of the averaged power spectra $\overline{S}_A$ and $\overline{S}_\phi$. The distribution $p_A$ of the time-averaged, normalised log-amplitude is parametrised by $\alpha$ and $\beta$ while $\zeta$ and $\eta$ parametrise the distribution $p_\lambda$ of the time-averaged instantaneous frequency.} \label{table}
\end{table}

The widely observed properties of natural sounds give us a set of six coefficients $(\gamma_A,\alpha,\beta,\gamma_\phi,\zeta,\eta)\in\mathbb{R}^6$ that portray global properties of a sound. Further, these coefficients can be extracted from the output of the cochlea-inspired device designed in Section~\ref{sec:metamaterial}. Global parameters of this kind have been shown to capture, in a perceptually significant sense, the quality of a sound and play an important role in our ability to recognise sounds efficiently \cite{mcdermott2011sound, woolley2005tuning, lesica2008efficient, nelken1999responses}. A question for future exploration is the extent to which adding these parameters to the information already gained from the biomimetic scattering transform studied in Section~\ref{sec:transforms} improves its performance on \emph{e.g.} classification problems. What is clear, however, is that including these coefficients increases the extent to which the representation algorithm mimics the perceptual abilities of the human auditory system.

\section{Concluding remarks}

Biomimicry has already had a significant impact on both the development of artificial hearing approaches and, conversely, on our understanding of biological auditory systems. In this work, we have set out the mathematical foundations to support the development of this powerful methodology. By developing a biomimetic signal processing algorithm that replicates both the response of a cochlea-inspired metamaterial and the human auditory system's ability to recognise natural sounds, we have demonstrated the three-way exchange of ideas that is possible within this framework. The concise formulas developed in this work demonstrate the power of asymptotic analysis, which can be used to facilitate future design breakthroughs and will play a key role in answering many of the open questions about how we interact with sounds in our environment.

\appendix
\section{Cascaded biomimetic scattering filters} \label{app:cascade}
In order to reveal richer properties of an acoustic signal, a common approach is to build on the method from Section~\ref{sec:transforms} by using the filters in a convolutional neural network \cite{lyon2017human,mallat2016understanding}. That is, a cascade that alternates convolutions with $h_n$ and some activation function $\Theta$:

\begin{align}
a_{n_1}^{(1)}[s](t) &= \Theta\left( s*h_{n_1} \right)(t), \nonumber \\
a_{n_1,n_2}^{(2)}[s](t) &= \Theta\left( a_{n_1}^{(1)}[s]*h_{n_2} \right)(t), \label{eq:adef} \\[-0.6em]
& \ \, \vdots \nonumber \\[-0.5em]
a_{n_1,\dots,n_k}^{(k)}[s](t) &= \Theta\left( a_{n_1,\dots,n_{k-1}}^{(k-1)}[s]*h_{n_k} \right)(t), \nonumber
\end{align}
where, in each case, the indices are such that $(n_1,n_2,\ldots,n_k)\in\{1,\ldots,N\}^k$. The vector $P_k=(n_1,\ldots,n_k)$ is known as the \emph{path} of $a_{P_k}^{(k)}$.

\begin{figure}
	\centering
	\includegraphics[width=\linewidth]{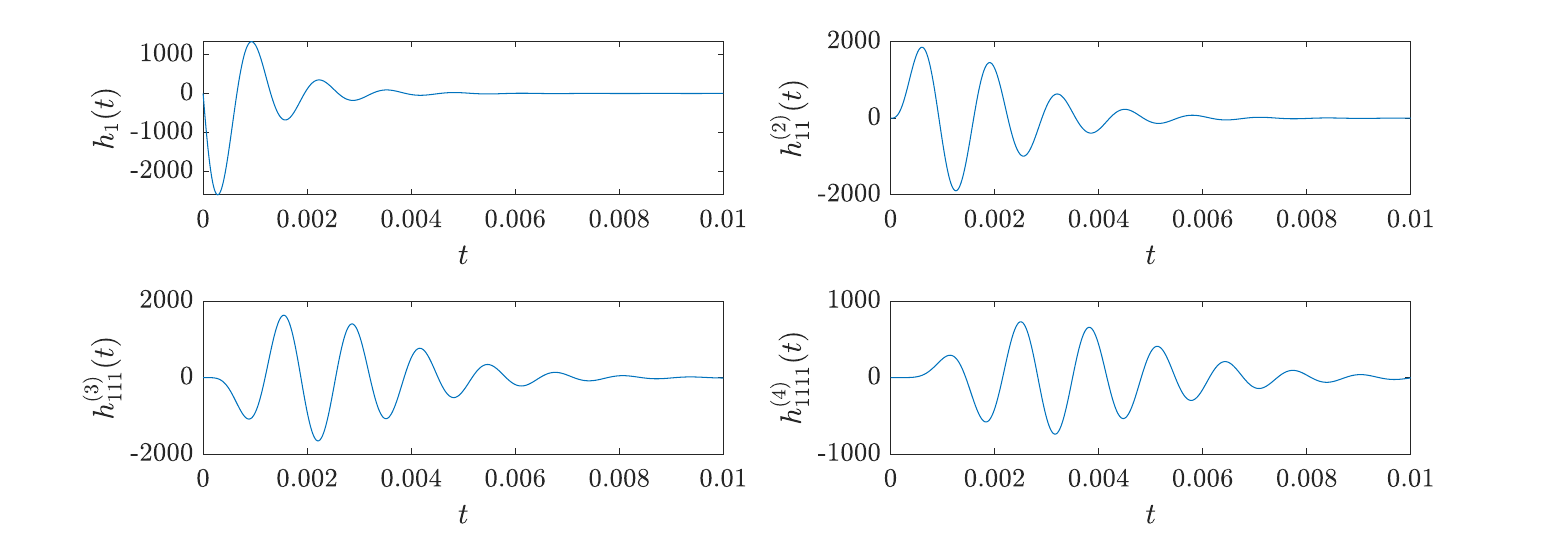}
	\caption{At successively deeper layers in a cascade of gammatone filters successively higher-order gammatones emerge. Here, increasing orders are shown for the first subwavelength resonant frequency in the case of 22 resonators.} \label{fig:gammatones}
\end{figure}

As an expository example, we consider the case where $\Theta:\mathbb{R}\to\mathbb{R}$ is the identity $Id(x)=x$. In this case, for any depth $k$ we have that $a_{P_k}^{(k)}[s]=s*h_{P_k}^{(k)}$ for some function $h_{P_k}^{(k)}$ which is the convolution of $k$ gammatones of the form \eqref{eq:hdef}, indexed by the path $P_k$. This simplification means that a more detailed mathematical analysis is possible, allowing us to briefly demonstrate some basic properties.

The basis functions $h_{P_k}^{(k)}$ take specific forms. In particular, the diagonal terms contain gammatones, as defined in \eqref{gammatone}. This is made precise by the following lemma.
  
\begin{lemma}[The emergence of higher-order gammatones]
  	For $k\in\mathbb{N}^+$ and $n\in\{1,\dots,N\}$, there exist non-negative constants $C_m^{n,k}$, $m=1,\dots,k$, such that
  	\begin{equation*}
  	h_{n,\dots,n}^{(k)}(t)=(c_n)^k\sum_{m=1}^{k}C_m^{n,k}g(t;m,\omega_n^+,m\tfrac{\pi}{2}).
  	\end{equation*}
  	In particular, $C_k^{n,k}>0$.
\end{lemma}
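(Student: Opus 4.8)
The plan is to use the fact that with the identity activation $\Theta=\mathrm{Id}$, the diagonal kernel $h^{(k)}_{n,\dots,n}$ is nothing but the $k$-fold self-convolution $h_n\ast\cdots\ast h_n$, so its structure can be read off in the frequency domain. First I would take the Fourier transform using the representation \eqref{eq:h_int_defn}: since convolution becomes multiplication, the transform of $h^{(k)}_{n,\dots,n}$ is
\[
\left(\frac{-\nu_n\Re(\omega_n^+)^2}{(\omega-\omega_n^+)(\omega-\omega_n^-)}\right)^k,
\]
a rational function whose only singularities are poles of order $k$ at $\omega_n^+$ and $\omega_n^-$. I would then expand it in partial fractions as $\sum_{m=1}^k\bigl(A_m(\omega-\omega_n^+)^{-m}+B_m(\omega-\omega_n^-)^{-m}\bigr)$ and invert each term separately by the same residue calculation already carried out in the proof of Theorem~\ref{thm:timedom}, closing the contour in the lower half-plane for $t>0$ (legitimate because $\Im\omega_n^\pm<0$ places both poles there) and in the upper half-plane for $t<0$, the latter giving $h^{(k)}_{n,\dots,n}(t)=0$ and hence the claimed support on $t\geq0$.

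The key structural observation is that inverting an order-$m$ pole $(\omega-p)^{-m}$ requires the $(m-1)$-th derivative in the residue formula, which produces a factor $t^{m-1}e^{-\i p t}/(m-1)!$ together with a numerical factor $(-\i)^m=e^{-\i m\pi/2}$. This last factor is exactly the origin of the phase shift $m\tfrac{\pi}{2}$ in the statement. Collecting the contribution of the pole at $\omega_n^+$ with that of its partner $\omega_n^-=-(\omega_n^+)^*$, whose partial-fraction coefficients are tied together by a relation of the form $B_m=(-1)^m A_m$ coming from this conjugate symmetry, the two complex exponentials $e^{\pm\i\Re(\omega_n^+)t}$ recombine into a single real cosine. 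Using $\cos(\theta+m\tfrac{\pi}{2})=(-1)^m\cos(\theta-m\tfrac{\pi}{2})$ to move the sign into the phase, the order-$m$ contribution is precisely $t^{m-1}e^{\Im(\omega_n^+)t}\cos\!\bigl(\Re(\omega_n^+)t-m\tfrac{\pi}{2}\bigr)=g(t;m,\omega_n^+,m\tfrac{\pi}{2})$, and summing over $m=1,\dots,k$ gives the decomposition.

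It then remains to pin down the coefficients $C_m^{n,k}$ and verify non-negativity. Expanding $(\omega-\omega_n^-)^{-k}$ about $\omega_n^+$, using that $\omega_n^+-\omega_n^-=2\Re(\omega_n^+)\in\mathbb{R}$, gives each $A_m$ explicitly as a power of $\Re(\omega_n^+)$ times a binomial coefficient; crucially the factor $\nu_n^k$ carried along cancels against the $c_n^k=(\nu_n\Re(\omega_n^+))^k$ that is factored out in the statement, leaving an expression proportional to $\binom{2k-m-1}{k-m}$, which is manifestly non-negative. The leading coefficient $C_k^{n,k}$ arises solely from the top-order term, with $A_k=(-\nu_n\Re(\omega_n^+)^2)^k/(2\Re(\omega_n^+))^k\neq0$ whenever $\nu_n\neq0$, so $C_k^{n,k}\neq0$. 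I expect the main obstacle to be precisely this phase-and-sign bookkeeping: one must combine the two conjugate pole contributions so that the phase lands on $+m\tfrac{\pi}{2}$ rather than $-m\tfrac{\pi}{2}$, which hinges on consistently tracking the factor $(-\i)^m$ and the coefficient relation $B_m=(-1)^m A_m$ (the base case $k=1$ reproducing $h_n=c_n e^{\Im(\omega_n^+)t}\sin(\Re(\omega_n^+)t)$ is a useful consistency check). An alternative, more computational route is induction on $k$, convolving the inductive hypothesis with $h_n$ via the causal Beta integral $\int_0^t u^{a}(t-u)^{b}\,\de u$; this also works but is messier, since mismatched exponents generate lower-order terms that must be tracked, so I would favour the transform approach.
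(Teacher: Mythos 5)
Your proposal is correct, but it takes a genuinely different route from the paper. The paper's proof never leaves the time domain: setting $G_n^m(t)=g(t;m,\omega_n^+,m\tfrac{\pi}{2})$, it verifies the convolution identity $(G_n^1*G_n^1)(t)=\tfrac{1}{2}G_n^2(t)+\tfrac{1}{2\Re(\omega_n^+)}G_n^1(t)$ together with the recursion $(G_n^{m-1}*G_n^1)(t)=\tfrac{1}{2(m-1)}G_n^{m}(t)+\tfrac{m-2}{2\Re(\omega_n^+)}(G_n^{m-2}*G_n^1)(t)$ for $m\geq3$, and iterates; non-negativity of the $C_m^{n,k}$ and the value $C_k^{n,k}=\frac{1}{2^{k-1}(k-1)!}>0$ fall out of the recursion with no complex analysis. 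You instead pass to the frequency domain, where $h^{(k)}_{n,\dots,n}$ has transform equal to the $k$-th power of the rational function in \eqref{eq:h_int_defn}, and invert the partial-fraction decomposition by the contour argument of Theorem~\ref{thm:timedom}. This is sound --- the higher-order poles only improve the decay on the arcs --- and it buys more than the paper states: carrying your expansion to the end gives every coefficient in closed form, $C_m^{n,k}=\binom{2k-m-1}{k-m}\frac{2^{m-2k+1}}{(m-1)!}\Re(\omega_n^+)^{m-k}$, which reproduces the paper's $C_k^{n,k}$ at $m=k$ and the coefficient $\frac{1}{2\Re(\omega_n^+)}$ at $(k,m)=(2,1)$, and is manifestly non-negative. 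The sign bookkeeping you flag as the main obstacle does close up: the prefactor $(-\nu_n\Re(\omega_n^+)^2)^k$ contributes $(-1)^k$, your $A_m$ carries $(-1)^{k-m}$, and the identity $\cos(\theta+m\tfrac{\pi}{2})=(-1)^m\cos(\theta-m\tfrac{\pi}{2})$ contributes $(-1)^m$, a net factor of $+1$. The trade-off is that the paper's argument is more elementary and self-contained, while yours reuses machinery already built for Theorem~\ref{thm:timedom} and yields strictly more explicit information.

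Two details to tighten. First, reality of $h^{(k)}_{n,\dots,n}$ by itself only gives $B_m=(-1)^m A_m^{*}$, with a complex conjugate; to drop the conjugate you also need the $A_m$ to be real, which your explicit expansion does supply, precisely because the pole separation $\omega_n^+-\omega_n^-=2\Re(\omega_n^+)$ is real. Second, your caveat that $C_k^{n,k}\neq0$ ``whenever $\nu_n\neq0$'' is superfluous and slightly at odds with your own observation that $\nu_n^k$ cancels against $(c_n)^k$: since $(c_n)^k$ is factored out in the statement, the constants $C_m^{n,k}$ do not depend on $\nu_n$ at all, and $C_k^{n,k}=\frac{1}{2^{k-1}(k-1)!}\neq0$ holds unconditionally (if $\nu_n=0$ both sides vanish and the canonical constants are unchanged).
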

\begin{proof}
  	Let us write $G_n^m(t):=g(t;m,\omega_n^+,m\tfrac{\pi}{2})$, for the sake of brevity. 	Firstly, it holds that $h_n(t)=c_n G_n^1(t)$. Furthermore, we have that
  	\begin{align*}
  	(G_n^1*G_n^1)(t)&= \frac{1}{2}G_n^2(t)+\frac{1}{2\Re(\omega_n^+)}G_n^1(t),
  	\end{align*}
  	as well as, for $m\geq3$, the recursion relation
  	\begin{align*}
  	(G_n^{m-1}*G_n^1)(t)
  	&=\frac{1}{2(m-1)} G_n^{m}(t) +\frac{m-2}{2\Re(\omega_n^+)}(G_n^{m-2}*G_n^1) (t).
  	\end{align*}
  	The result follows by repeatedly applying this formula. In particular, we find that the coefficients $C_k^{n,k}$ are given by
  	\begin{equation*}
  	C_k^{n,k}=\frac{1}{2^{k-1}\,(k-1)!},
  	\end{equation*}
	which are necessarily strictly positive for any positive $k=1,2,3,\dots$.
\end{proof}

\begin{remark}
	While higher-order gammatones appeared here through the cascade of filters, gammatones also arise directly from resonator scattering if higher-order resonators are used \cite{lyon2017human}. That is, resonators that exhibit higher-order singularities in the frequency domain. It was recently shown that if sources of energy gain and loss are introduced to an array of coupled subwavelength resonators then such higher-order resonant modes can exist \cite{ammari2020exceptional, ammari2020high}.
\end{remark}

For any depth $k\in\mathbb{N}$ and path $P_k\in\{1,\dots,N\}^k$, it holds that $h_{P_k}^{(k)}$ is a bounded continuous function. This means that the stability and continuity results from Section~\ref{sec:stab_cont} can be modified to also include this setting. In each case, the appropriate result would say that given a maximum depth $k_{max}\in\mathbb{N}^+$, there exists a positive constant such that for any depth $k=1,\dots,k_{max}$ and any path $P_k\in\{1,\dots,N\}^k$ the corresponding result holds for signals in $L^1(\mathbb{R})$.

\section*{Acknowledgments}
The authors are grateful to Andrew Bell and Florian Feppon for insightful comments on early versions of this manuscript. The numerical experiments in this work were carried out on a variety of sound recordings from the Univeristy of Iowa's archive of musical instrument samples \cite{iowasamples} and the collection of natural sound stimuli that was compiled by \cite{norman2015distinct}. The code used for this study is available for download from GitHub \cite{davies2020gitrepo}.

\bibliographystyle{siamplain}
\bibliography{references}
\end{document}